\documentclass[12pt,reqno]{amsart}

\usepackage{a4wide}
\usepackage{amsmath}
\usepackage{psfrag}
\usepackage{amsthm}
\usepackage{amssymb,amsxtra}
\usepackage{appendix}
\usepackage[usenames]{color}
\usepackage{stmaryrd}
\usepackage{mathrsfs}
\usepackage{upgreek}
\usepackage{ytableau}

\usepackage{comment}
\usepackage{tikz-cd}
\usetikzlibrary{cd}

\usepackage{graphicx}
\usepackage[all]{xy}
\usepackage{tikz}
\usetikzlibrary{matrix,fit}
\usepackage{geometry}
\usetikzlibrary{positioning}
\usepackage{hyperref}
\usepackage{cleveref}
\usepackage{enumerate}
\usepackage{tabularray}

 \allowdisplaybreaks

\newcommand{\Z}{\mathbb{Z}}

\DeclareMathOperator{\coredim}{\mathsf{c}}

\newcommand{\sym}[1]{\mathfrak{S}_{#1}}

\renewcommand{\P}{\Lambda^+}
\newcommand{\pReg}{\Lambda^+_p}

\newcommand{\R}{\mathbb{R}}

\newcommand{\sgn}{\operatorname{\mathrm{sgn}}}
\newcommand{\C}{\mathbb{C}}

\newcommand{\F}{{F}}
\newcommand{\K}{K}

\newcommand{\Sl}{a}
\newcommand{\Sr}{b}
\newcommand{\Dl}{\ell}
\newcommand{\Dr}{r}
\newcommand{\block}{b}

\renewcommand{\i}{\mathring{\imath}}
\renewcommand{\j}{\mathring{\jmath}}

\renewcommand{\t}{\mathfrak{t}}

\newcommand{\Ext}{\mathsf{Ext}}

\newcommand{\Span}{\mathsf{span}}

\newcommand{\Rect}{\mathsf{R}}

\renewcommand{\mod}{\mathrm{mod}\ }

\newcommand{\Rad}{\mathsf{Rad}}
\newcommand{\Proj}{\mathsf{P}}
\newcommand{\Inj}{\mathsf{I}}

\newcommand{\Res}{\mathrm{Res}}
\newcommand{\Ind}{\mathrm{Ind}}

\newcommand{\rad}{{\mathsf{Rad}}}

\newcommand{\Hom}{{\mathsf{Hom}}}

\renewcommand{\le}{\leqslant}

\renewcommand{\ge}{\geqslant}
\renewcommand{\geq}{\geqslant}

\theoremstyle{definition}

\newtheorem{defn}{Definition}[section]

\newtheorem{eg}[defn]{Example}
\newtheorem{conj}[defn]{Conjecture}

\theoremstyle{plain}

\newtheorem{prop}[defn]{Proposition}
\newtheorem{lem}[defn]{Lemma}
\newtheorem{cor}[defn]{Corollary}
\newtheorem{thm}[defn]{Theorem}

\numberwithin{equation}{section}

\allowdisplaybreaks

\begin{document}
\title[Tensor Product and the Stable Green Ring of $\F\sym{p}$]{Tensor Products and the Stable Green Ring of the Symmetric Group Algebra $\F\sym{p}$}

\author{Manzu Kua}
\address[M. Kua]{Division of Mathematical Sciences, Nanyang Technological University, SPMS-04-01, 21 Nanyang Link, Singapore 637371.}
\email{s220025@e.ntu.edu.sg}

\author{Kay Jin Lim}
\address[K. J. Lim]{Division of Mathematical Sciences, Nanyang Technological University, SPMS-04-01, 21 Nanyang Link, Singapore 637371.}
\email{limkj@ntu.edu.sg}

\begin{abstract}
We give an explicit formula for the decomposition of the tensor product of any two indecomposable non-projective modules for the symmetric group algebra $\F\sym{p}$ modulo projective modules. In particular, we show that the tensor product of two simple modules is semisimple modulo projectives. We also compute the Benson--Symonds invariants for all such indecomposable non-projective modules.
\end{abstract}

\subjclass[2010]{20C30, 20C05, 18G65}

\keywords{symmetric group, Green ring, bialgebra, tensor product}
\thanks{The authors thank Karin Erdmann for pointing out Proposition \ref{P:HypoA}(iii) and suggesting the computation in Section~\ref{S:BensonSymonds}. They also thank the anonymous referee for a suggestion that led to the addition of Proposition 5.2(vi). The second author is supported by Singapore Ministry of Education AcRF Tier 2 grant MOE-T2EP20225-0003.}

\maketitle

\section{Introduction}

Computing the tensor product of modules over a Hopf algebra is known to be very difficult. Even in the case of group algebras, there is no general method to decompose a tensor product as a direct sum of indecomposable modules. In the semisimple case, this problem is equivalent to multiplying irreducible characters. When we move to the modular case, the problem becomes much harder, since the group algebra often has infinitely many indecomposable modules.

For abelian groups in the ordinary case, the simple modules are one-dimensional, and in principle the tensor product can be read from the character table. For a cyclic group of order $p^s$ over a field of characteristic $p$, the indecomposable modules are precisely the modules $F[x]/(x^i)$ for $1 \le i \le p^s$, which correspond to Jordan blocks of size at most $p^s$ (see \cite{Higman:1954}). The decomposition of the tensor product of two such modules was described by Srinivasan \cite{Srinivasan:1964}. Subsequently, Dade \cite{Dade:1966} classified the indecomposable modules for groups with a normal cyclic Sylow $p$-subgroup. The tensor products of these indecomposable modules were later studied by Feit \cite{Feit:1966}, when the Sylow $p$-subgroup has order $p$, and by Lindsey \cite{Lindsey:1974} in the general case. Janusz \cite{Janusz:1966} also constructed indecomposable modules for groups with cyclic Sylow $p$-subgroups. As a consequence, when $G$ has a cyclic Sylow $p$-subgroup $P$ of order $p$, the tensor product of two indecomposable $\F G$-modules is isomorphic, modulo projective $\F G$-modules, to the induction of the tensor product of their Green correspondents.

For the symmetric group, in the ordinary case the irreducible modules are the Specht modules. Computing the coefficient of a Specht module in the decomposition of the tensor product of two Specht modules (these coefficients are known as the Kronecker coefficients) is known to be \#P-hard. In the theory of symmetric functions, this tensor product corresponds to the internal product in the ring of symmetric functions.

Consider now the modular case. Related to the decomposition problem is the block decomposition of the group algebra. The blocks of the symmetric groups have been classified by Brauer and Robinson (see \cite{Brauer:1947,robinsonproof}),  confirming the Nakayama conjecture, which states that two Specht modules lie in the same block if and only if their corresponding partitions have the same $p$-core. Furthermore, in the modular case, the Specht modules are no longer simple. Instead, the simple modules are parametrised by the $p$-regular partitions. They appear as the heads of certain Specht modules, as constructed by James \cite{james}. The decomposition of the tensor product of two Specht modules, or of two simple modules, remains poorly understood. It is known only in a few special cases, for example when one of the modules is the signature representation of the symmetric group. In particular, the tensor product of a Specht module with the signature representation is isomorphic to the dual of the Specht module labelled by the conjugate partition of the original one. On the other hand, the tensor product of a simple module labelled by $\lambda$ with the signature representation is the simple module labelled by another partition determined by the Mullineux map (see \cite{Ford/Kleshchev:1997,Mullineux:1979}).

In this paper, we study the tensor products of indecomposable modules for the group algebra $\F\sym{p}$. As mentioned earlier, since $\sym{p}$ has a cyclic Sylow $p$-subgroup of order $p$, this problem could in principle be approached using the general theory described above. However, obtaining an explicit decomposition still requires further work. Instead, we make use of the Ext-quiver of the principal block $\block_0$ of $\F\sym{p}$, together with the composition factors of the Specht modules and the Littlewood--Richardson rule. We show that the indecomposable non-projective $\F\sym{p}$-modules are precisely the syzygies of the simple modules in $\block_0$, and we obtain an explicit formula for the decomposition of their tensor products modulo projectives. In particular, this shows that the tensor product of two simple modules is semisimple modulo projectives. In the final section of the paper, we compute the Benson--Symonds invariant for all indecomposable $\F\sym{p}$-modules.

\section{Preliminaries}  For basic knowledge of the representation theory of finite-dimensional algebras, we refer the reader to \cite{alp,ben1}. For the representation theory of symmetric groups, we refer the reader to \cite{james,jk}.

Throughout, for any $a,b\in\R$, let $[a,b]$ be the set consisting of all integers in between $a$ and $b$ inclusively. We also let $\F$ be an algebraically closed field of characteristic $p > 0$. 

Let $A$ be a finite-dimensional $\F$-algebra and $\Rad(M)$ be the (Jacobson) radical of an $A$-module $M$. In general, we have $\Rad^i (M) = \Rad (\Rad^{i-1}(M))$ for $i \geq 2$. By convention, $\Rad^0(M)=M$. The quotient $\Rad^{i-1} (M) / \Rad^i (M)$ is called the $i$-th radical layer of $M$. We write $\Proj(M)$ for the projective cover of $M$. If $M_1, M_2, \ldots , M_r$ is a complete set of composition factors of $M$, then we write 
\[M \sim M_1 + M_2 + \cdots + M_r.\]
If $M$, $M'$ and $M''$ are modules such that the composition factors of $M'$ and $M''$ together form a complete list of composition factors of $M$, then we also write
\[M \sim M' + M''.\]
If $\pi : \Proj(M) \twoheadrightarrow M$ is a surjection onto $M$, we let $\Omega (M) := \ker \pi$. Inductively, one defines $\Omega^i(M) := \Omega (\Omega^{i-1} (M))$ for $i\ge 2$. On the other hand, let $\Omega^{-1} (M)$ be the cokernel of an injection $\iota : M \hookrightarrow \Inj(M)$ where $\Inj(M)$ is the injective hull of $M$. Inductively, one defines $\Omega^{-i}(M) := \Omega^{-1}(\Omega^{-i+1}(M))$ for $i \geq 2$. By convention, $\Omega^0 (M)$ is the largest non-projective summand of $M$, and is referred to as the core of $M$. These are the Heller syzygies of the module $M$ and are well-defined up to isomorphism.  
 
Let $\Ext_A^n(M,M')$ be the $n$-th extension group for any given two $A$-modules $M$ and $M'$. For any two simple $A$-modules $T,T'$, we have \[\dim_\F \Ext^1_A(T,T') = \dim_\F \Hom_A(T' , \rad (\Proj(T))/\rad^2 (\Proj(T))).\] 
If $T_1, T_2, \ldots, T_\ell$ is a complete set of distinct simple $A$-modules, the Ext-quiver of $A$ is the directed graph with vertices $\{1, 2, \ldots , \ell\}$, and the number of arrows $i \to j$ is equal to $\dim_\F (\Ext^1_A (T_i, T_j))$.

\begin{lem}\label{lem:Ext1}\ 
	\begin{enumerate}[(i)]
		\item Let $M$ be an indecomposable $A$-module such that $\Rad^2(M)=0$ and $T$ be a composition factor of $M/\Rad(M)$. If $\Rad(M)\ne 0$, then there exists a composition factor $T'$ of $\Rad(M)$ such that $\Ext^1_A(T,T')\neq 0$. 
		\item Let $M$ be an $A$-module. If $\Ext^1_A(T,T')=0$ for any two composition factors $T,T'$ of $M$, then $M$ is semisimple. 
	\end{enumerate}
\end{lem}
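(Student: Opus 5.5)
For part (i), I will argue by contradiction. Since $\Rad^2(M)=0$, the radical $\Rad(M)$ is semisimple, and so is $M/\Rad(M)$. Write $M/\Rad(M)\cong T^{\oplus m}\oplus X$, where $T$ is not a composition factor of $X$. Suppose that $\Ext^1_A(T,T')=0$ for every composition factor $T'$ of $\Rad(M)$. Because $\Rad(M)$ is a finite direct sum of such $T'$, additivity of $\Ext^1$ gives $\Ext^1_A(T,\Rad(M))=0$, and then also $\Ext^1_A(T^{\oplus m},\Rad(M))=0$.

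Next I pull back the extension $0\to\Rad(M)\to M\to M/\Rad(M)\to 0$ along the inclusion $T^{\oplus m}\hookrightarrow M/\Rad(M)$. This gives a submodule $N\subseteq M$ containing $\Rad(M)$ with $N/\Rad(M)\cong T^{\oplus m}$. The vanishing of $\Ext^1$ makes the sequence $0\to\Rad(M)\to N\to T^{\oplus m}\to 0$ split, so $N=\Rad(M)\oplus S$ with $S\cong T^{\oplus m}$ semisimple. Then $S$ is a semisimple submodule of $M$ with $S\cap\Rad(M)=0$.

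The main step is to promote $S$ to a direct summand of $M$. Let $N'$ be the preimage of $X$ in $M$, so that $N'+N=M$ and $N'\cap N=\Rad(M)$. I claim $M=S\oplus N'$. First, $S\cap N'\subseteq S\cap N\cap N'=S\cap\Rad(M)=0$. Second, $S+N'\supseteq S+\Rad(M)+N'=N+N'=M$. Since $S\neq 0$ (as $T$ occurs in the head) and $N'\supseteq\Rad(M)\neq 0$, this decomposition contradicts the indecomposability of $M$. Hence some composition factor $T'$ of $\Rad(M)$ satisfies $\Ext^1_A(T,T')\ne0$.

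For part (ii), I will induct on the Loewy length, or more directly show that $\Rad(M)=0$. Suppose $\Rad(M)\neq0$. Decompose $M$ into indecomposable summands and pick a summand $M_1$ with $\Rad(M_1)\ne0$. Replace $M_1$ by $M_1/\Rad^2(M_1)$; this quotient is still indecomposable, because its head equals the head of $M_1$ and a local-head argument splits the head compatibly. To avoid that subtlety, I will instead use the general fact that $\Rad(M)/\Rad^2(M)$ is a quotient of $\Rad(\Proj(M/\Rad M))/\Rad^2(\Proj(M/\Rad M))$. Now take a simple summand $T'$ of $\Rad(M)/\Rad^2(M)$, which is nonzero by Nakayama's lemma. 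It appears in the second radical layer of $\Proj(T)$ for some simple $T$ in the head of $M$, so by the displayed Hom formula, $\Ext^1_A(T,T')\ne0$. Both $T$ and $T'$ are composition factors of $M$, which contradicts the hypothesis. Therefore $\Rad(M)=0$ and $M$ is semisimple.

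The delicate point is the claim that $T'$ lies in the second radical layer of $\Proj(T)$ for some head factor $T$. I justify it as follows. The surjection $\Proj(M/\Rad M)=\bigoplus\Proj(T)\twoheadrightarrow M$ maps $\Rad^i$ onto $\Rad^i$, so it induces a surjection on the second radical layers.
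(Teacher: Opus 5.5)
Your proof is correct, and both parts take a different route from the paper. The one exception is a false aside in (ii), which your final argument does not use (see the last paragraph).

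For (i), the paper lifts $\Proj(T)\twoheadrightarrow T$ through $M\twoheadrightarrow T$ to a map $f\colon \Proj(T)\to M$. It then uses the Hom formula for $\Ext^1$ to show that the induced map from $\Rad(\Proj(T))/\Rad^2(\Proj(T))$ to $\Rad(M)$ vanishes. So $f$ factors through $T$ and gives a section of $M\twoheadrightarrow T$. You instead use $\Ext^1_A(T^{\oplus m},\Rad(M))=0$ directly: the pull-back of $0\to\Rad(M)\to M\to M/\Rad(M)\to 0$ splits, and you check by hand that the preimage $N'$ of $X$ is a complement to $S$. Your version needs only the extension-theoretic meaning of $\Ext^1$ and some submodule bookkeeping, with no projective covers.

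For (ii), the paper deduces the statement from (i), applied to an indecomposable summand $V$ of $N/\Rad^2(N)$ with $\Rad(V)\neq 0$. Your final argument bypasses (i) and any decomposition into indecomposables. The cover $\bigoplus\Proj(T)\twoheadrightarrow M$ induces a surjection of semisimple second Loewy layers. Hence every simple summand $T'$ of $\Rad(M)/\Rad^2(M)\neq 0$ is isomorphic to a summand of some $\Rad(\Proj(T))/\Rad^2(\Proj(T))$ with $T$ in the head of $M$. This is more direct, and it gives a slightly sharper conclusion: the non-vanishing $\Ext^1$ is between a head factor and a second-layer factor.

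You should, however, delete the remark that $M_1/\Rad^2(M_1)$ is still indecomposable "by a local-head argument". That argument works only when $M_1$ has simple head, and the claim is false in general. Take the path algebra of $1\to 3\to 5\leftarrow 4\leftarrow 2$ and let $M_1$ be the representation with one-dimensional spaces and identity maps. It is indecomposable, yet $\Rad^2(M_1)$ is the vertex-$5$ part, so $M_1/\Rad^2(M_1)$ splits as the sum of the pieces supported on $\{1,3\}$ and $\{2,4\}$. This is exactly why the paper only chooses some indecomposable summand of $N/\Rad^2(N)$ with nonzero radical. Once that aside is removed, your proof stands as written.
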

\begin{proof} For part (i), suppose on the contrary that $\Ext^1_A(T,T')=0$ for all such $T'$. We have 
	\[\xymatrix{&\Proj(T)\ar@{->>}[d]^\alpha\ar@{-->}[dl]_f\\ M\ar@{->>}[r]^\pi&T\ar[r]&0}\] In particular, $f:\Rad^i(\Proj(T))\to \Rad^i(M)$ for any $i\ge 1$. The map $f$ induces $\bar f:\Rad(\Proj(T))/\Rad^2(\Proj(T))\to \Rad(M)/\Rad^2(M)=\Rad(M)$ defined by $\bar f(\bar u)=f(u)$ for any $u\in\Rad(\Proj(T))$. Since $\Ext^1_A(T,T')=0$ for any composition factor $T'$ in $\Rad(M)$, we have $\bar f=0$. As such, $f$ annihilates $\Rad(\Proj(T))=\ker\alpha$ and it induces $f':\Proj(T)/\Rad(\Proj(T))\to M$ defined by $f'(\bar y)=f(y)$. Let $\beta:T\to \Proj(T)/\Rad(\Proj(T))$ be $\beta(x)=\bar y$ if $\alpha(y)=x$. Then $f'\circ \beta$ is a section of $\pi$. Since $M$ is indecomposable, we have $M\cong T$. This contradicts the fact that $\Rad(M)\neq 0$. 
	
	For part (ii), let $N$ be a non-simple indecomposable summand of $M$. Consider an indecomposable summand $V$ of $N/\Rad^2(N)$ such that $\Rad(V)\ne 0$. Since $\Rad^2(V)=0$, part (i) applies and there are composition factors $T,T'$ of $V$ and hence of $M$ such that $\Ext^1_A(T,T')\ne 0$, yielding a contradiction. Thus, every indecomposable summand of $M$ is simple. 
\end{proof}

We now consider modules for group algebras. Let $G$ be a finite group. We write $a(\F G)$ for the Green ring of the group algebra $\F G$. The stable Green ring of $\F G$ has a $\Z$-basis the set of isomorphism classes of indecomposable modules in the stable category of $\F G$. Its addition and multiplication are induced by the direct sum and tensor product of modules respectively. 

Let $M$ be an $\F G$-module. For a subgroup $H$ of $G$, we write $\Res_H(M)$ for the restriction of $M$ to $H$. If $M'$ is a $\F H$-module, we write $\Ind_{H}^G (M')$ for the induction of $M'$ to $G$. The dual module of $M$ is denoted as $M^*$. By abuse of notation, the trivial $\F G$-module is also denoted as $\F$. For any positive integer $n$, let $\coredim_n^G(M) := \dim_\F \Omega^0 (M^{\otimes n})$.  The Benson--Symonds invariant  is  the quantity
\[\gamma_G(M) := \limsup_{n \to \infty} \sqrt[n]{\coredim_n^G(M)}.\] We collate together some results we shall need from their paper.

 \begin{thm}[\cite{bensonsymonds}]\label{T:BensonSymonds} Let $M$ be an $\F G$-module. 
 \begin{enumerate}[(i)]
   \item For any short exact sequence of $\F G$-modules $0\to M_1\to M_2\to M_3\to 0$, we have $\gamma_G(M_2)\le \gamma_G(M_1)+\gamma_G(M_3)$. Furthermore, if the sequence splits, then we also have $\max\{\gamma_G(M_1),\gamma_G(M_3)\}\le \gamma_G(M_2)$. In particular, $\gamma_G(M)=\gamma_G(\Omega^0(M))$.
   \item We have $\gamma_G(\Omega(M))=\gamma_G(M)$.
   \item We have $\gamma_G(M)=\max_{E\le G}\gamma_E(M)$ where the maximum is taken over all elementary abelian $p$-subgroups of $G$ up to conjugation. 
   \item We have $\gamma_G(M^*)=\gamma_G(M)$.
 \end{enumerate}
 \end{thm}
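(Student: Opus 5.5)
This theorem is cited from Benson--Symonds, so the aim is to recover each part from the definition of $\gamma_G$, using the standard properties of cores and the Krull--Schmidt theorem. The basic tool is a comparison principle. If $M$ is isomorphic to a direct summand of $N\oplus(\text{projective})$, then $M^{\otimes n}$ is a summand of $N^{\otimes n}\oplus(\text{projective})$, because a projective module tensored with any module is projective. Hence $\coredim_n(M)\le\coredim_n(N)$ and so $\gamma_G(M)\le\gamma_G(N)$.

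\textbf{Part (i).} For a split sequence, $M_2=M_1\oplus M_3$. Each $M_i$ is a summand of $M_2$, so the comparison principle gives the lower bound $\max\{\gamma_G(M_1),\gamma_G(M_3)\}\le\gamma_G(M_2)$. Since $M=\Omega^0(M)\oplus(\text{projective})$, applying the same principle in both directions gives $\gamma_G(M)=\gamma_G(\Omega^0(M))$.

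For the subadditivity bound, filter $M_2^{\otimes n}$ by the tensor filtration. Its subquotients are $\binom{n}{k}$ copies of $M_1^{\otimes k}\otimes M_3^{\otimes(n-k)}$. The core dimension is not additive in short exact sequences, so I would use a stable invariant instead. Benson--Symonds work with an invariant that is subadditive on extensions and agrees asymptotically with the core dimension, such as the dimension of the image in a suitable quotient category or the rank over a polynomial/cohomology ring. For that invariant,
\[\coredim_n(M_2)\lesssim\sum_{k}\binom{n}{k}\coredim_k(M_1)\,\coredim_{n-k}(M_3),\]
where the core of a tensor product is bounded by the product of the cores, since projectives tensored with anything are projective. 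Taking $n$-th roots then gives $\gamma_G(M_2)\le\gamma_G(M_1)+\gamma_G(M_3)$.

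\textbf{Main obstacle.} This is the key step, because the inequality $\coredim(N_2)\le\coredim(N_1)+\coredim(N_3)$ fails in general. I would first try to prove it only up to a subexponential factor, which suffices for $n$-th roots. The approach would be to restrict to elementary abelian subgroups and use that a module over $\F E$ is projective exactly when it is projective on all cyclic shifted subgroups. The alternative is to quote the original paper for this step.

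\textbf{Part (ii).} Since $\Omega(M)\otimes\Omega(N)\cong\Omega^2(M\otimes N)\oplus(\text{projective})$, we get
\[\Omega(M)^{\otimes n}\cong\Omega^n(M^{\otimes n})\oplus(\text{projective}).\]
The problem is that $\dim\Omega^n(X)$ can grow with $n$. I would compare instead with $M^{\otimes n}\otimes\Omega^n(\F)$, and then bound $\dim\Omega^n(\F)$ and $\dim\Omega^{-n}(\F)$ polynomially in $n$ via the finite generation of cohomology. Polynomial factors disappear under $n$-th roots, giving $\gamma_G(\Omega(M))\le\gamma_G(M)$ and the reverse inequality.

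\textbf{Part (iii).} Restriction commutes with tensor powers and preserves projectivity, so $\gamma_H(M)\le\gamma_G(M)$ for every subgroup $H$. For the reverse inequality, Chouinard's theorem says a module is projective if and only if its restrictions to all elementary abelian subgroups are projective. I would combine this with a quantitative version: the core dimension over $G$ is bounded by a constant, depending only on $G$, times the maximum over $E$ of the core dimension over $E$. This would be obtained by inducing the cores back and using relative projectivity with bounded index. The constant disappears under $n$-th roots.

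\textbf{Part (iv).} We have $(M^*)^{\otimes n}\cong(M^{\otimes n})^*$, and duality preserves both projectivity and dimension, so $\coredim_n(M^*)=\coredim_n(M)$ and hence $\gamma_G(M^*)=\gamma_G(M)$.
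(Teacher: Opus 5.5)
The paper does not reprove this theorem. It quotes Benson--Symonds and only checks that $\gamma_E(M)=\gamma_{{}^gE}(M)$ for $g\in G$, by transporting a decomposition $\Res_E(N)=N'\oplus Q$ along $g$, so that the maximum in (iii) may be taken up to conjugation. Your proposal never addresses this point. Your arguments for (ii), (iv) and the split half of (i) are sound.

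\textbf{Subadditivity in (i).} You leave this step open, and the reason you give is wrong. The inequality $\dim\Omega^0(N_2)\le\dim\Omega^0(N_1)+\dim\Omega^0(N_3)$ does hold for every short exact sequence $0\to N_1\to N_2\to N_3\to 0$. Write $N_1=\Omega^0(N_1)\oplus Q_1$ and $N_3=\Omega^0(N_3)\oplus Q_3$ with $Q_1,Q_3$ projective. Since $Q_1$ is injective, $N_2\cong Q_1\oplus N_2/Q_1$. Let $X$ be the preimage of $\Omega^0(N_3)$ under $N_2/Q_1\to N_3$. Then $(N_2/Q_1)/X\cong Q_3$ is projective, so $N_2\cong X\oplus Q_1\oplus Q_3$ with $0\to\Omega^0(N_1)\to X\to\Omega^0(N_3)\to 0$ exact. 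Hence $\Omega^0(N_2)$ is a summand of $X$. Apply this along the tensor filtration of $M_2^{\otimes n}$ and use $\dim\Omega^0(A\otimes B)\le\dim\Omega^0(A)\dim\Omega^0(B)$. This gives your displayed estimate with a genuine $\le$. The bounds $\coredim_k^G(M_i)\le C_\epsilon(\gamma_G(M_i)+\epsilon)^k$ then finish the argument. No auxiliary invariant or shifted subgroups are needed.

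\textbf{Reverse inequality in (iii).} Relative projectivity gives $\dim\Omega^0(N)\le[G:P]\dim\Omega^0(\Res_P N)$ for a Sylow $p$-subgroup $P$, because $N$ is a summand of $\Ind_P^G\Res_P N$. It cannot take you from $P$ down to elementary abelian subgroups. The trivial $\F P$-module is relatively $E$-projective only when $E=P$, so already for $P=C_{p^2}$ there is nothing to ``induce back'' from. Chouinard's theorem is purely qualitative, so it does not supply the uniform constant either. The bound $\coredim_n^G(M)\le C\max_E\coredim_n^E(M)$ is true, but it needs a structural input. One such input is Carlson's theorem: $\F$ is a direct summand of a module $V$ with a finite filtration whose subquotients are $\Ind_{E_i}^G(W_i)$ with $E_i$ elementary abelian. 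Then $M^{\otimes n}$ is a summand of $M^{\otimes n}\otimes V$, whose induced filtration has subquotients $\Ind_{E_i}^G(\Res_{E_i}(M^{\otimes n})\otimes W_i)$. The subadditivity above then yields $\coredim_n^G(M)\le\sum_i[G:E_i]\dim(W_i)\,\coredim_n^{E_i}(M)$. For the paper's application $G=\sym{p}$ the Sylow subgroup is itself elementary abelian, so your Sylow reduction would suffice there, but not for the theorem as stated.
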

 \begin{proof} We only require further justification for part (iii). Let $g\in G$ and $E$ be an elementary abelian $p$-subgroup of $G$. Suppose that $M^{\otimes n}= N\oplus P$ where $N\cong\Omega^0(M^{\otimes n})$ and $\Res_E(N)=N'\oplus Q$ where $N'\cong \Omega^0(\Res_E(N))$. Then $\Res_{{}^gE}(N)\cong gN'\oplus gQ$ where $gN'\cong \Omega^0(\Res_{{}^gE}(N))$. Therefore, $\gamma_E(M)=\gamma_{{}^gE}(M)$.
 \end{proof}

We now focus in particular on the modules for symmetric groups. The symmetric group on $n$ letters is denoted as $\sym{n}$. Let $\P(n)$ and $\pReg(n)$  be the sets of partitions and $p$-regular partitions of $n$ respectively. For a composition $\delta=(\delta_1,\ldots,\delta_\ell)$ of $n$, we write $\sym{\delta}$ for the Young subgroup of $\sym{n}$ which is canonically isomorphic with the direct product $\sym{\delta_1} \times \sym{\delta_2} \times \cdots \times \sym{\delta_\ell}$. Let $\lambda\in\P(n)$. The symmetric group acts naturally on the set of all $\lambda$-tableaux. For a $\lambda$-tableau $T$, let $R_T$ and $C_T$ be its row and column stabilisers respectively. A $\lambda$-tabloid is an equivalence class $\{T\}$ of $\lambda$-tableaux where $T$ and $T'$ represent the same tabloid if and only if $T = \sigma T'$ for some $\sigma \in R_T$. Defining $\kappa_T := \sum_{\sigma \in C_T} (\sgn \sigma) \sigma$, we obtain the $\lambda$-polytabloid $e_T := \kappa_T\{T\}$. 

Let $\lambda \in \P(n)$. The Specht module $S^\lambda$ is the submodule of the permutation module $M^\lambda:=\Ind_{\sym{\lambda}}^{\sym{n}}\F$ spanned by the various polytabloids $e_T$'s. The module $S^\lambda$ has a basis $e_T$'s where $T$ runs through all standard $\lambda$-tableaux. Furthermore, \[(S^{\lambda'})^*\cong S^\lambda\otimes \sgn\] where $\lambda'$ is the conjugate partition of $\lambda$ and $\sgn$ is the signature representation. The simple modules for $\F\sym{n}$ are parametrised by the set of $p$-regular partitions $\pReg(n)$ and we denote them by $D^\lambda$, one for each $\lambda\in\pReg(n)$. Furthermore, we have $D^\lambda\cong S^\lambda/\Rad(S^\lambda)$ and $D^\lambda$ is self-dual. The Specht module $S^\lambda$ is indecomposable when $p$ is odd or $p=2$ and $\lambda$ is $2$-regular. By a result of Brauer and Robinson (see \cite{Brauer:1947,robinsonproof})  which is commonly known as the Nakayama conjecture, the blocks of $\F\sym{n}$ are in bijection with the $p$-cores of the partitions of $n$.

In this paper, we further focus on the group algebra $\F \sym{p}$. In this case, the $p$-core of a partition $\lambda\in\P(p)$ is either the partition itself or the empty partition. In the first case, we have $D^\lambda\cong S^\lambda\cong \Proj(D^\lambda)$. On the other hand, the block labelled by the empty partition is the principal block $\block_0$ of $\F\sym{p}$. In this case, since the $p$-core of $\lambda$ is empty, $\lambda$ must be a hook of size $p$, i.e., $\lambda=(p-i,1^i)$ for some $i\in [0,p-1]$. For $i\in [0,p-2]$, we write $D_i$ in place of $D^\lambda$ and $P_i=\Proj(D_i)$. Similarly, we write $S_i$ for $S^\lambda$ for $i\in [0,p-1]$. Finally, the direct summand of an $\F \sym{p}$-module $M$ belonging in $b_0$ is denoted as $(M)_0$. 

The Specht modules $S_0$ and $S_{p-1}$ are the trivial and sign modules respectively. For $i\in [1,p-2]$, the composition factors of $S_i$ are $D_i$ and $D_{i+1}$ with $D_{i+1}$ the unique maximal submodule of $S_i$. The radical layers of $P_i$ are also well studied and can be depicted as follows:

\begin{align*}
    P_0&={\raisebox{5ex}{\xymatrix@R=2mm@C=1.5mm{D_0\ar@{-}[d]\\ D_1\ar@{-}[d]\\ D_0}}},& 
    P_i&={\raisebox{5ex}{\xymatrix@R=2mm@C=1.5mm{&D_i\ar@{-}[dr]\ar@{-}[dl]&\\ D_{i-1}\ar@{-}[dr]&&D_{i+1}\ar@{-}[dl]\\ &D_i&}}},& 
    P_{p-2}&={\raisebox{5ex}{\xymatrix@R=2mm@C=1.5mm{D_{p-2}\ar@{-}[d]\\ D_{p-3}\ar@{-}[d]\\ D_{p-2}}}}.
\end{align*} In particular, we obtain the Ext-quiver of $\block_0$ as follows where the vertex $i$ labels the simple module $D_i$:
\[
\begin{tikzcd}[every arrow/.append style={shift left, -stealth}]
	\ \bullet\ar[phantom, "0"above=4pt] \arrow[r]   & \arrow[l]  \ \bullet\ar[phantom, "1"above=4pt] \arrow[r] & \arrow[l] \ \bullet\ar[phantom, "2"above=4pt] \arrow[r] & \arrow[l] \bullet\  \cdots \ \bullet  \arrow[r] & \arrow[l] \bullet\ar[phantom, "p-2"above=4pt]
\end{tikzcd}
\]

It is a well-known fact that the principal block of $\F\sym{p}$ is a Brauer tree algebra. As Janusz \cite[6.1 Theorem]{Janusz:1966} proved, the number of indecomposable modules of a Brauer tree algebra is given by $e(em+1)$ where $e$ is the number of edges in the Brauer tree and $m$ is the multiplicity of its exceptional vertex. In our case, there is no exceptional vertex and $e=p-1$ for $\block_0$.  

\begin{thm}\label{T:NumInd} The number of indecomposable modules in $\block_0$ is $p(p-1)$. 
\end{thm}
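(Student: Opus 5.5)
The statement follows directly from Janusz's count $e(em+1)$ for Brauer tree algebras, quoted just above. The principal block $\block_0$ is a Brauer tree algebra whose edges correspond to the simple modules $D_0,\ldots,D_{p-2}$, so $e=p-1$. Since there is no exceptional vertex, we take $m=1$ in Janusz's formula. This gives $(p-1)((p-1)\cdot 1+1)=p(p-1)$.

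To make the count independent of Janusz, I would also give a direct argument from the radical structure of the $P_i$ displayed above. The quiver with relations of $\block_0$ is the linear quiver $0\rightleftarrows 1\rightleftarrows\cdots\rightleftarrows p-2$. The relations are read off from the Loewy structures of the $P_i$: the Loewy length is $3$, and the two length-two paths $i\to i\pm1\to i$ give the same socle element (up to scalar). This makes $\block_0$ a special biserial, in fact Brauer graph, algebra of finite type, namely a symmetric Nakayama-like zigzag algebra. I would then count indecomposables in three steps.

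\begin{enumerate}[(i)]
\item The $p-1$ projective modules $P_0,\ldots,P_{p-2}$.
\item The non-projective indecomposables, which are indecomposable modules over $\block_0/\mathrm{soc}$. That algebra is a string algebra with $\Rad^2=0$ on the relevant paths, so its indecomposables are string modules. These strings are the zigzag walks on the linear quiver whose arrows alternate in direction relative to radical layers, i.e. modules with Loewy length at most $2$. Since the Ext-quiver is a line, such a module is determined by an interval $[a,b]\subseteq[0,p-2]$ together with a choice of which vertices are in the top; tops and socles alternate along the interval.
\item Counting these: for an interval with $k=b-a+1$ vertices there are exactly two alternating patterns if $k\ge2$, and one (a simple module) if $k=1$. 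The number of intervals of length $k$ is $p-1-k+1=p-k$. The total is therefore $(p-1)+\sum_{k=2}^{p-1}2(p-k)=(p-1)+(p-2)(p-1)=(p-1)^2$. Adding the $p-1$ projectives gives $(p-1)^2+(p-1)=p(p-1)$, as required.
\end{enumerate}

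The main obstacle is justifying in step (ii) that every non-projective indecomposable has Loewy length at most $2$ and is uniserial-zigzag, i.e. that there are no band modules. For the first point: a non-projective indecomposable $M$ with $\Rad^2(M)\neq0$ would contain a socle element of some $P_i$, and symmetric self-injectivity then forces $P_i$ to split off $M$. For the second point, the underlying quiver of the $\Rad^2=0$ algebra is a tree, a line of type $A$ with doubled edges. Its separated quiver is a disjoint union of two type-$A_{p-1}$ quivers, so Gabriel's theorem gives finitely many indecomposables, and they correspond to the positive roots. Counting positive roots gives the same total: $2\binom{p}{2}=p(p-1)$ for the two $A_{p-1}$ quivers, minus $p-1$ for the simples counted twice. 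This yields $(p-1)^2$, which matches step (iii).
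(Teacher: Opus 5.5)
Your first paragraph is exactly the paper's argument. The paper gives no proof beyond quoting Janusz's count $e(em+1)$ with $e=p-1$ and $m=1$.

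The direct count you add is a genuinely different, self-contained route, and it is correct. Since $\Rad^2(P_i)=\mathrm{soc}(P_i)$ for all $i$, your splitting-off argument identifies the non-projective indecomposables with the indecomposable modules over the radical-square-zero algebra $\block_0/\mathrm{soc}(\block_0)$; that argument needs only self-injectivity, not symmetry. The separated quiver is two paths of type $A_{p-1}$, so it gives $2\binom{p}{2}-(p-1)=(p-1)^2$ such modules. This matches your interval count, so your $(p-1)^2$ pairwise non-isomorphic zigzag modules are all of them and no band modules occur.

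One wording slip: the Gabriel quiver of $\block_0/\mathrm{soc}(\block_0)$ is not a tree, since it contains the $2$-cycles $i\to i+1\to i$. It is the separated quiver that is a forest, and that is what your argument actually uses.

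As for what each route buys: the citation is a one-line argument valid for every Brauer tree algebra. Your count uses only the Loewy structures of the $P_i$ displayed in the paper. It also exhibits the non-projective indecomposables explicitly as the zigzag modules of Loewy length $\le 2$ supported on intervals of the Ext-quiver. The paper later obtains this description for the $\Omega^i(D_j)$ in Corollary~\ref{C:OmegaiDjStructure}, where it deduces completeness of the list from this theorem. Your argument would remove the dependence on Janusz at that point.
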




We end this section with a result regarding the Heller translates of the simple modules $D_j$. 

\begin{lem}\label{lem: limtan} \cite[Lemma 2.5]{limtan} Let $p \geq 3$ and $i\in\Z$. Suppose that $i \equiv k \pmod{2p-2}$ with $k\in [0,2p-3]$. Then 
	\[\Omega^i (D_0) \cong \left \{\begin{array}{ll}
		S_k&\text{if $k\in [0,p-1]$,} \\
		S_{2p-k-2}^*& \text{if $k\in [p,2p-3]$.}
    \end{array}\right .\]
\end{lem}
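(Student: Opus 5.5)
The plan is to compute the successive Heller translates of $D_0$ directly from the radical structure of the projectives $P_j$ displayed above. Since $\Omega$ is a bijection on isomorphism classes of indecomposable non-projective modules, it then suffices to show that $\Omega^{2p-2}(D_0)\cong D_0$ and to identify $\Omega^k(D_0)$ for $k\in[0,2p-3]$; negative $i$ follow from periodicity.

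First I fix conventions. For $i\in[1,p-2]$ I take $S_i$ to be uniserial of length two, with head $D_i$ and socle $D_{i-1}$. For instance $S^{(p-1,1)}$ is the sum-zero submodule of the natural permutation module, and it contains the trivial submodule spanned by the all-ones vector because $p\mid p$. I also use two identifications:
\[
S_0=D_0 \quad\text{(the trivial module)}, \qquad S_{p-1}=\sgn\cong D_{p-2}.
\]
The second holds because $D_{p-2}$ is the unique one-dimensional nontrivial simple in $\block_0$. Dually, $S_j^*$ is uniserial with head $D_{j-1}$ and socle $D_j$. The Ext-quiver shows that $\dim\Ext^1(D_a,D_{a\pm1})=1$. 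Hence a uniserial module of length two with prescribed head and socle is unique up to isomorphism, and such modules can be recognised from their composition factors and layers alone.

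Next I run the computation in two chains, using the radical layers of each $P_j$.
\begin{enumerate}[(i)]
\item $\Omega(D_0)=\Rad(P_0)$, which is uniserial $D_1$ over $D_0$; this is $S_1$.
\item For $k\in[1,p-3]$, the projective cover of $S_k$ is $P_k$. The kernel of $P_k\twoheadrightarrow S_k$ has composition factors $D_{k+1}$ and $D_k$, with head $D_{k+1}$ and socle $D_k=\operatorname{soc}P_k$. It is indecomposable because it has simple socle, so it is $S_{k+1}$.
\item For $k=p-2$, the same argument inside $P_{p-2}$, whose heart is only $D_{p-3}$, gives a simple kernel $D_{p-2}=S_{p-1}$.
\item Next, $\Omega(S_{p-1})=\Rad(P_{p-2})$, which is uniserial $D_{p-3}$ over $D_{p-2}$; this is $S_{p-2}^*$, corresponding to $k=p$.
\item For $j\in[2,p-2]$, the projective cover of $S_j^*$ is $P_{j-1}$. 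The kernel has head $D_{j-2}$ and socle $D_{j-1}$, so it is $S_{j-1}^*$.
\item Finally, $\Omega(S_1^*)=\ker(P_0\twoheadrightarrow S_1^*)=D_0$.
\end{enumerate}
Counting the steps gives $\Omega^k(D_0)=S_k$ for $k\in[0,p-1]$ and $\Omega^k(D_0)=S^*_{2p-k-2}$ for $k\in[p,2p-3]$, with $\Omega^{2p-2}(D_0)\cong D_0$.

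The main obstacle is the bookkeeping at the two ends of the Brauer tree (the line), where $P_0$ and $P_{p-2}$ are uniserial rather than of diamond shape. I also need to ensure that the kernels in the middle steps are genuinely uniserial and not split. I would handle the latter by noting that each kernel is a submodule of $P_k$, whose socle is simple, so the kernel has simple socle and is therefore indecomposable. Combined with the Ext-dimension count, this pins it down uniquely.
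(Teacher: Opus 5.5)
Your proof is correct. The paper gives no argument of its own for this lemma; it is quoted from \cite[Lemma 2.5]{limtan}. So there is no in-paper proof to match, and what you have written is a self-contained replacement built only from data recorded in Section~2: the radical layers of the $P_j$, the fact that $\dim\Ext^1(D_a,D_{a\pm1})=1$, and self-duality of the $D_a$. In effect it is Green's walk around the Brauer tree, which here is a line with $p-1$ edges and no exceptional vertex.

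Some specific points check out:
\begin{enumerate}[(a)]
\item Your simple-socle argument is exactly what rules out split kernels in (ii) and (v).
\item The end steps (iii), (iv) and (vi) at the uniserial $P_0$ and $P_{p-2}$ are correct.
\item For $p=3$ the ranges in (ii) and (v) are empty, and the walk still closes up after four steps.
\end{enumerate}
Your convention $\operatorname{soc}S_i\cong D_{i-1}$ is the right one. The sentence in Section~2 calling $D_{i+1}$ the maximal submodule of $S_i$ is a slip; the paper itself uses $W\cong D_{i-1}\subseteq S_i$ in the proof of Corollary~\ref{C:OmegaiDjStructure}. The one input you assert without justification, $\sgn\cong D_{p-2}$, follows from $\dim D_j=\binom{p-2}{j}$, for example via $D_j\cong\bigwedge^jD_1$ as recalled in the proof of Lemma~\ref{lem: resSimple}.

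For comparison, a more structural route, closer to the tools the paper uses in Section~3, takes exterior powers of $0\to S_1\to M^{(p-1,1)}\to\F\to 0$.
\begin{enumerate}[(a)]
\item The module $\bigwedge^{i+1}M^{(p-1,1)}\cong\Ind^{\sym{p}}_{\sym{p-i-1}\times\sym{i+1}}(\F\boxtimes\sgn)$ is projective, being induced from a $p'$-subgroup.
\item Combined with $\bigwedge^iS_1\cong S_i$, this gives short exact sequences $0\to S_{i+1}\to P_i\to S_i\to 0$ for $i\in[0,p-2]$, which is the first half of the period.
\item The second half follows from $\Omega(\sgn)\cong\Omega(\F)\otimes\sgn\cong S_1\otimes\sgn\cong S_{p-2}^*$ together with $\Omega(M^*)\cong(\Omega^{-1}M)^*$.
\end{enumerate}
That route avoids all radical-layer bookkeeping but needs $\bigwedge^iS_1\cong S_i$ in characteristic $p$. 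Yours needs nothing beyond the displayed structure of the projectives and is easy to verify line by line.
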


\begin{cor}\label{cor: SiDj is Heller} Let $p\ge 3$, $i\in \Z$ and $j\in [0,p-2]$. Suppose that $i \equiv k \pmod{2p-2}$ with $k\in [0,2p-3]$. Then \[\Omega^i(D_j)\cong \left \{\begin{array}{ll}\Omega^0(S_k\otimes D_j)&\text{if $k\in [0,p-1]$,}\\ \Omega^0(S_{2p-k-2}^*\otimes D_j)&\text{if $k\in [p,2p-3]$.}\end{array}\right .\] In particular, the non-projective part of $S_k\otimes D_j$ (respectively, $S^*_{2p-k-2}\otimes D_j$) is nonzero indecomposable. 
\end{cor}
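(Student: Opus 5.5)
The plan is to deduce the corollary from Lemma~\ref{lem: limtan} by tensoring with $D_j$. The key tool is the standard fact that, for group algebras, tensoring with a module commutes with Heller translates modulo projectives:
\[\Omega^i(M)\otimes N\cong \Omega^i(M\otimes N)\oplus(\text{projective}).\]
This holds because $P\otimes N$ is projective whenever $P$ is projective. Tensoring a minimal projective presentation $0\to\Omega(M)\to \Proj(M)\to M\to 0$ with $N$ then gives a short exact sequence whose middle term is projective. Schanuel's lemma therefore yields $\Omega(M)\otimes N\cong \Omega(M\otimes N)\oplus Q$ with $Q$ projective. Induction handles $i\ge 1$, and the dual argument with injective hulls handles $i<0$, using that $\F\sym{p}$ is self-injective.

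First I will show that $D_0$ is the trivial module, so that $D_0\otimes D_j\cong D_j$. Indeed, $D_0=D^{(p)}$ is the head of $S^{(p)}=S_0$, which is trivial. Combining this with the Heller compatibility above,
\[\Omega^i(D_j)\cong\Omega^i(D_0\otimes D_j)\cong\Omega^0\bigl(\Omega^i(D_0)\otimes D_j\bigr).\]
Substituting $\Omega^i(D_0)\cong S_k$ or $S^*_{2p-k-2}$ from Lemma~\ref{lem: limtan} then gives exactly the two cases of the displayed formula. Taking $\Omega^0$ simply strips the projective summands, and the Heller translates are well defined up to isomorphism, so the cases are consistent.

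For the final assertion, I need $\Omega^i(D_j)$ to be nonzero and indecomposable. Since $\F\sym{p}$ is self-injective, $\Omega$ and $\Omega^{-1}$ are mutually inverse bijections on isomorphism classes of indecomposable non-projective modules. The module $D_j$ lies in $\block_0$, whose projectives $P_j$ have length $3$, so $D_j$ is non-projective and indecomposable. Hence every $\Omega^i(D_j)$ is nonzero, indecomposable and non-projective. By the formula, it equals the non-projective part of $S_k\otimes D_j$ (respectively $S^*_{2p-k-2}\otimes D_j$).

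The only point needing care is the Heller compatibility for negative $i$, together with bookkeeping to ensure that the projective summands discarded at each stage do not interfere. Both are handled by Schanuel's lemma and the Krull--Schmidt theorem.
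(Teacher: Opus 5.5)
Your proposal is correct and follows the paper's own argument. The paper likewise applies Lemma~\ref{lem: limtan} to $D_0\otimes D_j\cong D_j$ via the identity $\Omega^i(U\otimes V)\cong\Omega^0(\Omega^i(U)\otimes V)$. You additionally spell out two things the paper leaves implicit: the Schanuel/Krull--Schmidt justification of that identity (including negative $i$), and why $\Omega^i(D_j)$ is nonzero, indecomposable and non-projective, which yields the final assertion.
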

\begin{proof} Use Lemma \ref{lem: limtan} and the fact that $\Omega^i(U\otimes V)\cong \Omega^0(\Omega^i(U)\otimes V)$.
\end{proof}

\section{Heller Translates of the Simple Modules}

In this section, we examine the Heller translates of the simple modules in the principal block $\block_0$ of $\F\sym{p}$. Our result expresses the Heller translates of a simple module in terms of the tensor products of the simple module with certain Specht or dual Specht modules (see Corollary \ref{cor:0block&Heller}).

Our method begins with studying the composition factors of $P_i\otimes D_j$ belonging in the principal block $\block_0$ and then obtaining such information for $S_i\otimes D_j$. To begin, we require the following two lemmas which study the restriction of some Specht and simple modules to certain Young subgroup.

\begin{lem}\label{lem: resSpecht} Let $i\in [0,p-2]$ and $H=\sym{p-i-1}\times \sym{i+1}$. Then \[ \Res_H S_1 \cong \left \{\begin{array}{ll}(S^{(p-2,1)}\boxtimes \F)\oplus (\F \boxtimes S^{(1)})&\text{if $i=0$,}\\ 
(S^{(1)} \boxtimes \F) \oplus (\F \boxtimes S^{(p-2,1)}) &\text{if $i=p-2$,}\\ 
(S^{(p-i-2,1)} \boxtimes \F) \oplus (\F \boxtimes S^{(i,1)}) \oplus (\F \boxtimes \F)&\text{if $i\neq 0,p-2$.}\end{array}\right .\]
\end{lem}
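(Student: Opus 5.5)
The plan is to work inside the natural permutation module rather than with polytabloids in general. For $\lambda=(p-1,1)$ the permutation module $M^{(p-1,1)}$ is the natural module $M=\F^p$ with basis $v_1,\dots,v_p$, and $S_1$ is spanned by the polytabloids $v_a-v_b$. So $S_1$ is exactly the augmentation kernel
\[
S_1=\Bigl\{\textstyle\sum_k x_kv_k:\ \sum_k x_k=0\Bigr\},
\]
of dimension $p-1$. I will restrict this explicit description to $H$ and split it by hand.

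First, write $\{1,\dots,p\}=A\sqcup B$ with $|A|=p-i-1$ and $|B|=i+1$, so that $H=\sym{A}\times\sym{B}$. As $\F H$-modules, $\Res_H M\cong (M_A\boxtimes\F)\oplus(\F\boxtimes M_B)$, where $M_A,M_B$ are the natural permutation modules of $\sym{A},\sym{B}$. Let $S_A\subseteq M_A$ and $S_B\subseteq M_B$ be the augmentation kernels. Then $S_A\cong S^{(p-i-2,1)}$ when $|A|\ge 2$ and $S_A=0$ when $|A|=1$; similarly $S_B\cong S^{(i,1)}$ when $i\ge1$ and $S_B=0$ when $i=0$. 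The submodule $(S_A\boxtimes\F)\oplus(\F\boxtimes S_B)$ clearly lies in $\Res_H S_1$ and has codimension one in it. Its dimension is $(p-i-2)+i=p-2$ when $i\neq0,p-2$, and the boundary cases give the same count.

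Next, I will produce an $H$-stable complement, which is the only real point. Take the $H$-fixed vector
\[
w=(i+1)\sum_{a\in A}v_a-(p-i-1)\sum_{b\in B}v_b .
\]
Its coordinate sum is $(i+1)(p-i-1)-(p-i-1)(i+1)=0$, so $w\in S_1$. It spans a trivial submodule $\F\boxtimes\F$. Moreover, $w$ does not lie in $(S_A\boxtimes\F)\oplus(\F\boxtimes S_B)$: its $A$-coordinate sum is $(i+1)(p-i-1)$, which is nonzero in $\F$ because $1\le i+1\le p-1$ and $1\le p-i-1\le p-1$. This nonvanishing modulo $p$ is the step I expect to need care; it is exactly where the hypothesis $i\in[0,p-2]$ enters. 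Counting dimensions then gives
\[
\Res_H S_1=(S_A\boxtimes\F)\oplus(\F\boxtimes S_B)\oplus\F w .
\]

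Finally, I will read off the three cases. For $i\ne0,p-2$ all three summands are as stated. For $i=0$ we have $S_B=0$, and $\sym{B}=\sym{1}$, so $\F\boxtimes\F=\F\boxtimes S^{(1)}$, giving $(S^{(p-2,1)}\boxtimes\F)\oplus(\F\boxtimes S^{(1)})$. For $i=p-2$ we have $S_A=0$, and symmetrically $\F w\cong S^{(1)}\boxtimes\F$, giving $(S^{(1)}\boxtimes\F)\oplus(\F\boxtimes S^{(p-2,1)})$.
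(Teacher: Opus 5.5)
Your proof is correct, but it takes a somewhat different route from the paper's.

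\textbf{The paper's route.} The paper first notes that $\Res_H S_1$ is semisimple, since $|H|=(p-i-1)!\,(i+1)!$ is prime to $p$. It settles $i=0$ by the branching rule, and $i=p-2$ by conjugating $H$ to $\sym{p-1}\times\sym{1}$. For $i\neq 0,p-2$ it works with the basis $e_k$ of differences of tabloids and with explicit generators of $H$. It checks that $U=\Span\{e_2,\dots,e_{p-i-1}\}\cong S^{(p-i-2,1)}\boxtimes\F$; this is your $S_A$. It then identifies the quotient $\Res_H S_1/U$ with the permutation module $\F\boxtimes M^{(i,1)}$. Semisimplicity is invoked twice: once to split off $U$, and once to obtain $M^{(i,1)}\cong S^{(i,1)}\oplus\F$.

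\textbf{Your route.} You work inside the augmentation kernel and write down an explicit $H$-stable complement, namely $S_B\oplus\F w$ with $w$ the $H$-fixed vector. The only arithmetic input is $(i+1)(p-i-1)\not\equiv 0 \pmod p$.

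\textbf{What each buys.} Your argument treats all three cases uniformly: in the boundary cases $S_A$ or $S_B$ simply vanishes. It avoids Maschke's theorem and the branching rule, and it produces a concrete decomposition with explicit spanning vectors. It also shows exactly where the hypothesis $i\in[0,p-2]$ is used. The paper's argument is more computational and leans on the semisimplicity of $\F H$, but it needs that semisimplicity again in the next lemma anyway.
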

\begin{proof} The restriction to $H$ is semisimple. When $i=0$, the isomorphism follows from the branching rule. When $i=p-2$, the subgroup is conjugate to $\sym{p-1}\times \sym{1}$ and hence it follows from the previous case. We now assume that $i\neq 0,p-2$. For any $k\in [1,p]$, let $\t_k$ be the $(p-1,1)$-tabloid with $k$ in its second row and let $e_k=\t_k-\t_1$ for $k\neq 1$. The Specht module $S_1$ has a basis $\{e_k:k\in[2,p]\}$. The Young subgroup $H$ is generated by $\alpha = (1, 2, \ldots , p-i-1)$, $\beta = (1,2)$, $\sigma = (p-i, p-i+1, \ldots , p)$, and $\tau = (p-i, p-i+1)$. 

It is easy to check that 
\begin{align*}
\alpha e_k &= 
\left \{\begin{array}{ll}
	e_{k+1}-e_2 &\text{if $k\in[2,p-i-2]$,} \\
	-e_2 &\text{if $k = p-i-1$,} 
\end{array}\right .&
\beta e_k &=
\left \{\begin{array}{ll}
	-e_2 &\text{if $k=2$,} \\
	e_k-e_2 &\text{if $k\in [3,p-i-1]$.} 
\end{array}\right .
\end{align*} Furthermore, both $\sigma,\tau$ fix $e_k$ for $k\in [2,p-i-1]$. Thus, $U:=\Span \{e_2, \ldots , e_{p-i-1}\}\cong S^{(p-i-2,1)}\boxtimes \F$ is an indecomposable summand of $\Res_HS_1$. 

We now consider $W:=(\Res_HS_1)/U$. It has a basis $\{\overline{e_{p-i}}, \ldots, \overline{e_p}\}$. We claim that $W\cong \F \boxtimes M^{(i,1)}$. Again, we leave it to the reader to check that both $\alpha$ and $\beta$ fix $\overline{e_k}$ for $k\in [p-i,p]$. Since the second factor $L$ of $H$ acts only on the numbers $p-i,\ldots,p$, for any $\delta\in L$, we have \[\delta\overline{e_k}=\overline{\t_{\delta(k)}-\t_{\delta(2)}}= \overline{\t_{\delta(k)}-\t_2}=\overline{e_{\delta(k)}}.\] The stabiliser of $\overline{e_p}$ is the permutation subgroup $L'\cong \sym{i}$ of $L$ fixing the number $p$. As such, as $\F L$-module, $W$ is isomorphic to $\Ind^L_{L'} \F$. This proves the claim. Finally, since $M^{(i,1)}\cong S^{(i,1)}\oplus \F$, our proof is complete. 
\end{proof}

\begin{lem}\label{lem: resSimple} Let $i,j\in [0,p-2]$ and $H=\sym{p-i-1}\times \sym{i+1}$. Then 
\[\Res_H D_j \cong \bigoplus \big( S^{(p-i-1-r,1^r)} \boxtimes S^{(i-j+r+1, 1^{j-r})} \big)\] where $r$ runs through the integer interval $[0,p-i-2]\cap [j-i,j]$. 
\end{lem}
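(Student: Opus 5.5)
The plan is to exploit that $H=\sym{p-i-1}\times\sym{i+1}$ is a $p'$-group, because both factors have order prime to $p$. Hence $\F H$ is semisimple, and every $\F H$-module is determined up to isomorphism by its Brauer character, i.e.\ by its class in the Grothendieck group. So $\Res_H D_j$ is determined by the class of $D_j$ in the Grothendieck group of $\F\sym{p}$ together with the restrictions of the Specht modules to $H$. Moreover, the Specht modules $S^\mu\boxtimes S^\nu$ of $H$ are simple and pairwise non-isomorphic, since they are reductions of ordinary irreducibles and $p\nmid |H|$.

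\textbf{Step 1 (express $D_j$ through Specht modules).} I would use the composition factors of the hook Specht modules recorded above: $S_0=D_0$, $S_i\sim D_i+D_{i+1}$, and the sign module at the end of the chain. Telescoping gives
\[
[D_j]=\sum_{k=0}^{j}(-1)^{j-k}[S_k]
\]
in the Grothendieck group. The first thing I will do is check this against the convention at the end of the chain, that is, whether $S_{p-2}$ and $S_{p-1}$ are simple. If that check fails, I will telescope from the other end instead, writing $[D_j]=\sum_{k>j}(-1)^{k-j-1}[S_k]$.

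\textbf{Step 2 (restrict hooks to $H$).} Since $\F H$ is semisimple, $\Res_H S_k$ can be computed at the level of ordinary characters by the Littlewood--Richardson rule. Set $a=p-i-1$ and $b=i+1$. For a hook $(p-k,1^k)$, the only LR constituents of the restriction to $\sym{a}\times\sym{b}$ are products of hooks, namely
\[
\Res_H S_k\cong\bigoplus_r\Big(S^{(a-r,1^r)}\boxtimes S^{(b-k+r,1^{k-r})}\ \oplus\ S^{(a-r,1^r)}\boxtimes S^{(b-k+r+1,1^{k-r-1})}\Big).
\]
This reflects that removing a hook $(a-r,1^r)$ leaves a skew shape that is a disconnected union of a row and a column, and the row and column may be joined in two ways. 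In the sum, $r$ ranges over indices for which all the partitions are legitimate hooks of the right sizes, with the convention that a summand involving a non-partition is omitted. As a cross-check, the case $k=1$ recovers Lemma~\ref{lem: resSpecht}.

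\textbf{Step 3 (alternating sum).} I would substitute Step 2 into the alternating sum of Step 1. For fixed $r$, write
\[
X_{r,m}:=S^{(a-r,1^r)}\boxtimes S^{(b-m,1^{m})},
\]
so that the restriction of $S_k$ contributes $X_{r,k-r}+X_{r,k-r-1}$. Summing with signs $(-1)^{j-k}$, consecutive values of $k$ cancel these pairs, and only $X_{r,j-r}=S^{(a-r,1^r)}\boxtimes S^{(i-j+r+1,1^{j-r})}$ survives, which is exactly the summand in the statement. The surviving term has nonnegative coefficient, so the virtual character is a genuine character and gives the module isomorphism by semisimplicity. The range $r\in[0,p-i-2]\cap[j-i,j]$ is precisely the set where both $(a-r,1^r)$ and $(i-j+r+1,1^{j-r})$ are hook partitions, i.e.\ where $0\le r\le a-1$ and $0\le j-r\le b-1$.

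\textbf{Main obstacle.} I expect the main difficulty to be the bookkeeping of the boundary terms, namely the indices $r$ and $k-r$ at which one of the two hooks degenerates, for example when $b-k+r+1>b$ or when $r=a$. These must cancel correctly in the telescoping rather than leaving stray trivial or sign summands. The end of the chain $S_{p-2},S_{p-1}$ requires the same care. I would handle this by verifying the small cases $i=0$ and $j=0$ directly, using Lemma~\ref{lem: resSpecht} and the trivial module, before trusting the general cancellation.
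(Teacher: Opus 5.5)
Your argument is correct, but it takes a different route from the paper. The paper never passes to the Grothendieck group. It takes $\Res_H D_1=V\oplus W$, with $V=S^{(p-i-2,1)}\boxtimes\F$ and $W=\F\boxtimes S^{(i,1)}$, directly from Lemma~\ref{lem: resSpecht} by removing the trivial summand. It then uses $D_j\cong\bigwedge^j D_1$, the identity $\bigwedge^j(V\oplus W)\cong\bigoplus_r\bigwedge^rV\otimes\bigwedge^{j-r}W$, and the fact that $\bigwedge^r S^{(m-1,1)}\cong S^{(m-r,1^r)}$ for $m<p$. The range $[0,p-i-2]\cap[j-i,j]$ then appears automatically as the set of $r$ for which both exterior powers are nonzero.

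You instead use semisimplicity of $\F H$ to reduce to characters, then combine the decomposition numbers of the hook Specht modules, the Littlewood--Richardson (Pieri) restriction of hooks, and a telescoping sum. Your route avoids the quoted input $D_j\cong\bigwedge^jD_1$ and does not need Lemma~\ref{lem: resSpecht}. The paper's route is shorter and involves no boundary bookkeeping.

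The obstacle you anticipate does not materialise. Adopt the convention $X_{r,m}=0$ unless $0\le r\le a-1$ and $0\le m\le b-1$. Then your formula for $\Res_H S_k$ holds uniformly. The degenerate skew shapes, with empty row part ($k-r=b$) or empty column part ($k=r$), correspond exactly to the dropped terms $X_{r,b}$ and $X_{r,-1}$. In the telescoping, the only leftover term besides $X_{r,j-r}$ is $X_{r,-r-1}=0$, so no stray trivial or sign summands survive.

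There is one correction to Step 1. The input you need is $S_k\sim D_{k-1}+D_k$ for $k\in[1,p-2]$, together with $S_0=D_0$ and $S_{p-1}=\sgn\cong D_{p-2}$. This is what the paper actually uses later, for instance in the proof of Corollary~\ref{C:OmegaiDjStructure}. With $S_i\sim D_i+D_{i+1}$, as you wrote (copying the preliminaries), your telescoping formula already fails at $j=1$. With the correct indexing, both of your formulas are valid.
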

\begin{proof} Consider the short exact sequence \[ 0 \to \F \boxtimes \F \to \Res_H S_1 \to \Res_H D_1 \to 0.\] By Lemma \ref{lem: resSpecht}, since we are in the semisimple case, we have 
\[ \Res_H D_1 \cong \left \{\begin{array}{ll}S^{(p-2,1)}\boxtimes \F&\text{if $i=0$,}\\ 
\F \boxtimes S^{(p-2,1)} &\text{if $i=p-2$,}\\ 
(S^{(p-i-2,1)} \boxtimes \F) \oplus (\F \boxtimes S^{(i,1)})&\text{if $i\neq 0,p-2$.}\end{array}\right .\] Let $V=S^{(p-i-2,1)}\boxtimes \F$ and $W=\F \boxtimes S^{(i,1)}$. For any $r\ge 0$, we have \[\bigwedge^rV\cong (\bigwedge^rS^{(p-i-2,1)})\boxtimes \F \cong \left \{ \begin{array}{ll}
 S^{(p-i-1-r,1^r)}\boxtimes \F&\text{if $r\in [0,p-i-2]$,}\\ 
 0&\text{otherwise.}\end{array}\right .\] Similarly, $\bigwedge^{j-r}W\cong \F \boxtimes S^{(i-j+r+1,1^{j-r})}$ if $r\in [j-i,j]$ and 0 otherwise. Since $D_j\cong \bigwedge^j D_1$ (that includes $j=0$ and can be found for example in \cite[\S2.2]{limwang}) and using $\bigwedge^j (V\oplus W)\cong \bigoplus_{r=0}^j (\bigwedge^rV)\otimes (\bigwedge^{j-r}W)$ (see, for instance, \cite[Lemma 1.14.3]{bensonpgroups}), we have the desired isomorphism in our statement. 
\end{proof}

We can now describe the composition factors of $(P_i\otimes D_j)_0$ in terms of the Specht modules. For this, we make use of the fact that $P_i$ is a signed permutation module. 

\begin{lem}\label{lem: PitimesDj} Let $i,j\in [0,p-2]$. We have
	\[(P_i \otimes D_j)_0 \sim \sum \big( S_{i-j+2r} + S_{i-j+2r+1}\big)\] where $r$ runs through $[0,p-i-2]\cap [j-i,j]$.
\end{lem}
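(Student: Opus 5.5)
The plan is to write $P_i$ as the principal-block part of an induced module from $H=\sym{p-i-1}\times\sym{i+1}$. Frobenius reciprocity then turns $P_i\otimes D_j$ into an induction of $\Res_H D_j$, which Lemma \ref{lem: resSimple} describes completely. The composition factors are then read off with the Littlewood--Richardson rule.

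\textbf{Step 1: realise $P_i$ as a signed permutation module.} Let $Y=\Ind_H^{\sym{p}}(\F\boxtimes\sgn)$. Since $p-i-1,\,i+1<p$, the order of $H$ is prime to $p$, so $\F\boxtimes\sgn$ is projective and hence so is $Y$. Moreover $\F H$ is semisimple, so $\F\boxtimes\sgn$ lifts to characteristic zero. Therefore the composition factors of $Y$ are those of the reduction modulo $p$ of the corresponding ordinary induced character. By the Littlewood--Richardson (Pieri) rule this character is $\chi^{(p-i,1^i)}+\chi^{(p-i-1,1^{i+1})}$, so $Y\sim S_i+S_{i+1}$. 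In particular $Y$ lies entirely in $\block_0$, and its composition factors are $D_i$ twice together with its two neighbours, with the obvious modifications at $i=0$ and $i=p-2$. Comparing with the radical layers of the projectives displayed above, the projective module $Y$ must be $P_i$. Here one uses that a projective module is determined by its composition factors via the Cartan matrix, which is invertible. So $P_i\cong Y$.

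\textbf{Step 2: apply Frobenius reciprocity.} We have $P_i\otimes D_j\cong \Ind_H^{\sym{p}}\big((\F\boxtimes\sgn)\otimes\Res_H D_j\big)$. Inserting Lemma \ref{lem: resSimple} gives a direct sum, over $r\in[0,p-i-2]\cap[j-i,j]$, of the modules
\[
\Ind_H^{\sym{p}}\big(S^{(p-i-1-r,1^r)}\boxtimes (S^{(i-j+r+1,1^{j-r})}\otimes\sgn)\big).
\]
In $\sym{i+1}$ (semisimple) we have $S^{(i-j+r+1,1^{j-r})}\otimes\sgn\cong S^{(j-r+1,1^{i-j+r})}$, the conjugate hook. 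Each summand is again induced from a semisimple, liftable $\F H$-module. Hence its composition factors are those of the reduction of the ordinary induced character, namely the Littlewood--Richardson product of the hooks $(p-i-1-r,1^r)$ and $(j-r+1,1^{i-j+r})$.

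\textbf{Step 3: project to $\block_0$.} By the Nakayama conjecture, a Specht constituent $S^\mu$ with $\mu\vdash p$ lies in $\block_0$ if and only if $\mu$ is a hook. Any non-hook $\mu$ is its own $p$-core, so $S^\mu=D^\mu$ lies in its own block and is discarded. Thus it remains to find the hook constituents of the product of two hooks $(a,1^b)$ and $(c,1^d)$.

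\textbf{Main obstacle: the hook constituents of the product.} I expect this Littlewood--Richardson check to be the main obstacle, and I would do it as follows. Add the boxes of $(c,1^d)$ to $(a,1^b)$ subject to the lattice-word condition. The result is a hook only if all $c$ boxes labelled $1$ go into the first row, except possibly one placed at the bottom of the first column. The $d$ boxes labelled $2,\dots,d+1$ must go down the first column. This yields exactly $(a+c,1^{b+d})$ and $(a+c-1,1^{b+d+1})$, each with multiplicity one. With $b+d=r+(i-j+r)=i-j+2r$ these are $S_{i-j+2r}$ and $S_{i-j+2r+1}$. Summing over $r$ gives the stated formula.
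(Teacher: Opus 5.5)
Your proposal is correct and follows the paper's proof essentially step by step. It uses the same realisation $P_i\cong\Ind_H^{\sym{p}}(\F\boxtimes\sgn)$, the same tensor identity combined with Lemma \ref{lem: resSimple}, the same conjugation $S^{(i-j+r+1,1^{j-r})}\otimes\sgn\cong S^{(j-r+1,1^{i-j+r})}$, and the Littlewood--Richardson rule restricted to hook constituents. The only differences are that you prove $P_i\cong\Ind_H^{\sym{p}}(\F\boxtimes\sgn)$ directly, via the Pieri rule and invertibility of the Cartan matrix, where the paper cites \cite[Theorem 1.4]{glow}, and that you work out the hook Littlewood--Richardson computation that the paper leaves to the reader.
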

\begin{proof} Let $H=\sym{p-i-1}\times \sym{i+1}$. By \cite[Theorem 1.4]{glow}, we have \[P_i\cong M((p-i-1)|(i+1))=\Ind^{\sym{p}}_H(\F \boxtimes \sgn).\] Using Lemma \ref{lem: resSimple}, we have \begin{align*}
  P_i\otimes D_j&\cong \big (\Ind^{\sym{p}}_H(\F \boxtimes \sgn)\big )\otimes D_j\\
  &\cong \Ind^{\sym{p}}_H\big ((\F \boxtimes \sgn)\otimes \bigoplus \big ( S^{(p-i-1-r,1^r)} \boxtimes S^{(i-j+r+1, 1^{j-r})} \big)\big )\\
  &\cong \bigoplus \Ind^{\sym{p}}_H \big( S^{(p-i-1-r,1^r)} \boxtimes \big (S^{(i-j+r+1, 1^{j-r})}\otimes \sgn \big)\big )
\end{align*} where $r\in [0,p-i-2]\cap [j-i,j]$. Since $S^{(j-r+1, 1^{i-j+r})}$ is simple, we have  \[S^{(i-j+r+1, 1^{j-r})}\otimes \sgn\cong (S^{(j-r+1, 1^{i-j+r})})^*\cong S^{(j-r+1, 1^{i-j+r})}.\] So $(P_i\otimes D_j)_0\sim \sum \Ind^{\sym{p}}_H \big( S^{(p-i-1-r,1^r)} \boxtimes S^{(j-r+1, 1^{i-j+r})}\big )_0$.  By the Littlewood--Richardson Rule, we have \[\Ind^{\sym{p}}_H \big( S^{(p-i-1-r,1^r)} \boxtimes S^{(j-r+1, 1^{i-j+r})}\big )_0\sim S_{i-j+2r} + S_{i-j+2r+1}.\] This completes the proof. 
\end{proof}

Our next task is to describe the composition factors of $(S_i\otimes D_j)_0$. For a cleaner description, we introduce the following combinatorial gadget.  

\begin{defn}\label{D:Rect} Consider the grid of squares with the horizontal (respectively, vertical) lines labelled by $0,1,\ldots,p$ from top to bottom (respectively, $0,1,\ldots,p-2$ from left to right). Each grid point is labelled by $(a,b)$ if it belongs in the horizontal and vertical lines labelled by $a$ and $b$ respectively. For $j \in [0, p-2]$, consider the rectangle (in red in Figure \ref{Fig:diagram}) with vertices the grid points $(0,j)$, $(j,0)$, $(p-2,p-2-j)$ and $(p-2-j,p-2)$. We call it the $j$-rectangle. We shall see that the horizontal line labelled by $i$ correspond to the $i$-th Heller translates of $D_j$. As such, we also call the line the $i$-th layer. The labels of the vertical lines correspond to the simple modules $D_0,\ldots,D_{p-2}$. For each $i$-th layer, we let
\begin{align*}
	\Sl_{i,j} &= \left \{\begin{array}{ll}
		j-i &\text{if $i \in [0,j]$,} \\ 
		i-j-1&\text{if $i \in [j+1, p-1]$,} 
	\end{array}\right . \\
	\Sr_{i,j} &= \left \{\begin{array}{ll}
		j+i &\text{if $i \in [0, p-2-j]$,}\\
		2p-j-3-i&\text{if $i \in [p-j-1, p-1]$.}
	\end{array}\right .
\end{align*}  The grid points $(i,\Sl_{i,j})$ and $(i,\Sr_{i,j})$ in the $i$-th layer are indicated as blue points in Figure \ref{Fig:diagram}. On the other hand, for each $i\in [0,p-2]$, let $(i,\Dl_{i,j})$ and $(i,\Dr_{i,j})$ be the leftmost and rightmost grid points in the $i$-th layer of the $j$-rectangle respectively, that is, $\Dl_{i,j}=\Sl_{i,j}+\delta_{i>j}$ and $\Dr_{i,j}=\Sr_{i,j}-\delta_{i>p-2-j}$. In total, such a decoration is called the $j$-diagram. Finally, we let \[\Rect(i,j)=\{\Dl_{i,j}+2k:k\in [0,(\Dr_{i,j}-\Dl_{i,j})/2]\},\] that is, $\Rect(i,j)$ labels every other grid point in the $i$-th layer of the $j$-rectangle from $\Dl_{i,j}$ to $\Dr_{i,j}$.

\begin{figure}[htbp]
\[\scalebox{0.5}{
\begin{tikzpicture}
\foreach \i in {2,3}
{\draw[dashed] (\i,8) -- (\i,-1);
\draw[dashed] (\i+3,8) -- (\i+3,3.5);
\draw[dashed] (\i+5,8) -- (\i+5,3.5);
\draw[dashed] (\i+11,-8) -- (\i+11,1);
\draw[dashed] (\i+6,-8) -- (\i+6,-3.5);}

\draw[dashed] (10,-8) -- (10,-3.5);
\draw[dashed] (12,-8) -- (12,1);

\foreach \i in {0,1,2}
{\draw[dashed] (9,\i+5) -- (0,\i+5);
\draw[dashed] (4.5,\i) -- (0,\i);
\draw[dashed] (11.5,\i-2) -- (16,\i-2);
\draw[dashed] (7,\i-7) -- (16,\i-7);}

\draw[dashed] (0,3) -- (4.5,3);
\draw[dashed] (7,-4) -- (16,-4);

\foreach \i in {2,3,5,6,7}
{\draw[thick] (\i,\i)--(\i+7,\i-7);}

\foreach \j in {0,1,2,6,7}
{\draw[thick] (\j+2,-\j+2)--(\j+7,-\j+7);}

\draw[very thick,color=red] (7,7)--(2,2)--(9,-5)--(14,0)--cycle;

\foreach \k in {0,1,2.5,4,5}
{\node at (\k+5,\k-2) {\rotatebox{-45}{\Large$\cdots$}};}

\foreach \l in {0,1,6,7}
{\node at (\l+4.5,-\l+3.5) {\rotatebox{45}{\Large$\cdots$}};}


\foreach \i/\label in {1/0,2/1,4/j-2,5/j-1,6/j,7/j+1}
{\node at (\i+1,9) {\rotatebox{90}{$\label$}};}

\foreach \i/\label in {8/p-3-j,9/p-2-j,10/p-1-j,12/p-4,13/p-3,14/p-2}
{\node at (\i,-9) {\rotatebox{90}{$\label$}};}

\foreach \k/\label in {7/0, 6/1, 5/2, 3/j-1, 2/j, 1/j+1, 0/j+2}
{\node at (-1,\k) {$\label$};}

\foreach \k/\label in {-7/p, -6/p-1, -5/p-2, -4/p-3, -2/p-j, -1/p-1-j, 0/p-2-j}
{\node at (17,\k) {$\label$};}

\foreach \i/\j in {7/7, 6/6, 8/6, 5/5, 9/5, 3/3, 2/2, 2/1, 3/0, 8/-5, 9/-6, 10/-5, 13/-2, 14/-1, 14/0, 13/1}
{\node at (\i,\j) {\textcolor{blue}{\huge{$\bullet$}}};}

\node at (-1,4) {$\vdots$};
\node at (17,-3) {$\vdots$};
\node at (4,9) {$\cdots$};
\node at (11,-9) {$\cdots$};
\end{tikzpicture}}
\]
\caption{$j$-diagram}
\label{Fig:diagram}
\end{figure}
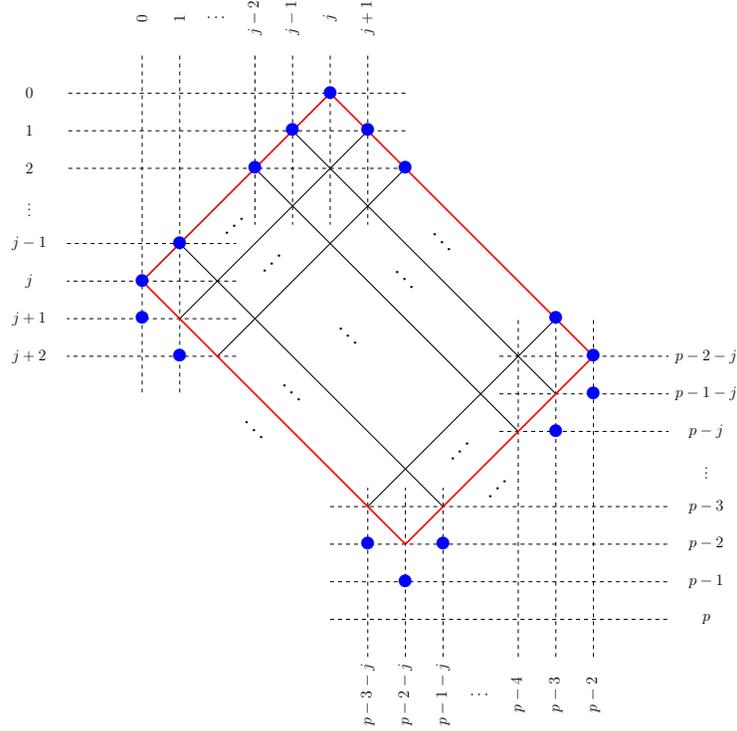
\end{defn}

\begin{lem}\label{lem:compSiDj} Let $i\in [0, p-1]$ and $j\in [0,p-2]$. Then \[(S_i\otimes D_j)_0\sim \sum_{t=\Sl_{i,j}}^{\Sr_{i,j}} D_t.\]
\end{lem}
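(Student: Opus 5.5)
Since everything involved is additive on composition factors, the plan is to work in the Grothendieck group of $\block_0$ and read off $(S_i\otimes D_j)_0$ from Lemma \ref{lem: PitimesDj} by a telescoping argument. Tensoring with $D_j$ is exact, and projection to $\block_0$ is exact. The structure of $P_i$ gives short exact sequences $0\to S_{i+1}\to P_i\to S_i\to 0$ for $i\in[0,p-2]$: the head $D_i$ and $D_{i+1}$ form $S_i$, while the remaining $D_{i-1}$ and the socle $D_i$ form $S_{i+1}$. In the notation $\sim$ this reads
\[(P_i\otimes D_j)_0\sim (S_i\otimes D_j)_0+(S_{i+1}\otimes D_j)_0 .\]
The base case is $(S_0\otimes D_j)_0=D_j$, and $\Sl_{0,j}=\Sr_{0,j}=j$, so the formula holds for $i=0$. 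Inductively, $[(S_{i+1}\otimes D_j)_0]=[(P_i\otimes D_j)_0]-[(S_i\otimes D_j)_0]$. So it suffices to check one identity in the Grothendieck group: the class from Lemma \ref{lem: PitimesDj} equals $\sum_{t=\Sl_{i,j}}^{\Sr_{i,j}}[D_t]+\sum_{t=\Sl_{i+1,j}}^{\Sr_{i+1,j}}[D_t]$.

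To verify this identity, first expand the Specht modules into simples. We have $[S_k]=[D_k]+[D_{k+1}]$ for $k\in[1,p-2]$, together with $[S_0]=[D_0]$ and $[S_{p-1}]=[D_{p-2}]$; in other words $[S_k]=[D_k]+[D_{k-1}]$ with the convention $D_{-1}=D_{p-1}=0$ (using $D_k$ for $S_k$'s head correctly: $S_k\sim D_k+D_{k+1}$ for $1\le k\le p-2$). Then $S_{m}+S_{m+1}\sim D_m+2D_{m+1}+D_{m+2}$ when $1\le m$ and $m+1\le p-2$, with the obvious truncations at the ends.

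Summing over $r\in[0,p-i-2]\cap[j-i,j]$ with $m=i-j+2r$, the values $m$ run in steps of $2$ from $|i-j|$-ish up to about $\min(i+j,2p-i-j-4)$. The resulting sum is a "thickened" interval of simples, with multiplicity $2$ in the interior and multiplicity $1$ at the extreme ends. This has to be matched against the sum of the two consecutive layer intervals $[\Sl_{i,j},\Sr_{i,j}]$ and $[\Sl_{i+1,j},\Sr_{i+1,j}]$, which overlap in all but their endpoints because $\Sl$ and $\Sr$ change by $\pm1$ from layer to layer.

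The main obstacle is the case bookkeeping. The formulas for $\Sl$ and $\Sr$ switch at $i=j$ and at $i=p-2-j$, and the $\pm1$ truncations at $D_0$ and $D_{p-2}$ (from $S_0$ and $S_{p-1}$) must be tracked. I will treat the ranges $i<j$, $i\ge j$ (left end) and $i<p-2-j$, $i\ge p-2-j$ (right end) separately, checking the left and right endpoints independently since the interior always has multiplicity $2$. The reflection $\lambda\mapsto\lambda'$ (tensoring with $\sgn$, which sends $D_t\mapsto D_{p-2-t}$ and relates $j$ to $p-2-j$) can be used to reduce the right-end cases to the left-end ones. This completes the induction up to $i=p-1$.
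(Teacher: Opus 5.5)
Your overall route is the paper's own: induct on $i$ via $P_i\sim S_i+S_{i+1}$ and Lemma~\ref{lem: PitimesDj}, reduce to an identity in the Grothendieck group of $\block_0$, and check that identity with the decomposition numbers of the hook Specht modules. The gap is in that last input. You state it inconsistently, and the version that actually drives your computation is wrong.

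The correct fact is $S_k\sim D_{k-1}+D_k$ for $k\in[1,p-2]$, with head $D_k$ and socle $D_{k-1}$, together with $S_0=D_0$ and $S_{p-1}=D_{p-2}$. For instance, $S_1=S^{(p-1,1)}$ contains the trivial submodule spanned by $\sum_k\t_k=\sum_{k\ge2}e_k$, consistent with the statement at $i=1$, $j=0$. So in $0\to S_{i+1}\to P_i\to S_i\to 0$ the quotient $S_i$ is the head $D_i$ together with $D_{i-1}$, and $S_{i+1}$ is $D_{i+1}$ over the socle $D_i$; you have $D_{i-1}$ and $D_{i+1}$ interchanged.

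Your parenthetical $[S_k]=[D_k]+[D_{k-1}]$ with $D_{-1}=D_{p-1}=0$ is right. But $S_k\sim D_k+D_{k+1}$ and the displayed $S_m+S_{m+1}\sim D_m+2D_{m+1}+D_{m+2}$ are not, and with the latter the identity fails. For $i=1$, $j=0$ and $p\ge5$ it would give $(P_1\otimes D_0)_0=P_1\sim S_1+S_2\sim D_1+2D_2+D_3$. This contradicts the Loewy structure of $P_1$, whereas layers $1$ and $2$ of the $0$-diagram give $D_0+2D_1+D_2$. Under that convention one does not even have $[S_i]+[S_{i+1}]=[P_i]$. The sentence in Section~2 describing the maximal submodule of $S_i$ has the same slip; the paper's later computations, e.g.\ $S_{i+1}\sim D_{i+1}+D_i$ in the proof of Lemma~\ref{L: DiDj semisimple}, use the correct version.

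Once you use $S_k\sim D_{k-1}+D_k$ throughout, the argument closes. We get $S_m+S_{m+1}\sim D_{m-1}+2D_m+D_{m+1}$. Summing over $m=m_0,m_0+2,\dots,m_1$ with $m_0=|i-j|$ and $m_1=\min\{i+j,2p-i-j-4\}$ (your range is exact) gives $\sum_{t=m_0-1}^{m_1+1}D_t+\sum_{t=m_0}^{m_1}D_t$, with $D_{-1}=D_{p-1}=0$.

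A sum of two overlapping intervals depends only on the two left and the two right endpoints. So it remains to note:
\begin{itemize}
\item $\{\Sl_{i,j},\Sl_{i+1,j}\}=\{m_0-1,m_0\}$ for $i\ne j$, while $\Sl_{j,j}=\Sl_{j+1,j}=0=m_0$;
\item $\{\Sr_{i,j},\Sr_{i+1,j}\}=\{m_1,m_1+1\}$ for $i\ne p-2-j$, while $\Sr_{p-2-j,j}=\Sr_{p-1-j,j}=p-2=m_1$.
\end{itemize}
These two exceptional layers are exactly where $\Sl$, resp.\ $\Sr$, does not move by $\pm1$, contrary to your remark.

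Your reduction of the right end to the left end via $D_t\otimes\sgn\cong D_{p-2-t}$ is sound, since $p-2-\Sr_{i,j}=\Sl_{i,p-2-j}$ and $p-2-m_1(i,j)=m_0(i,p-2-j)$. It does the job of the paper's split into $j<(p-1)/2$ and the complementary case, which the paper leaves to the reader.
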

\begin{proof} Fix $j$. We argue by induction on $i$. For $i=0$, we have $\Sl_{0,j}=\Sr_{0,j}=j$ and, on the other hand, \[(S_0\otimes D_j)_0=D_j.\] Suppose now that the formula holds for some $i\in [0,p-2]$. To complete our proof, since $P_i\sim S_i+S_{i+1}$, by Lemma \ref{lem: PitimesDj}, we just need to check that \begin{equation}\label{Eq:DSpecht}
  \sum_{t=\Sl_{i,j}}^{\Sr_{i,j}} D_t+\sum_{t=\Sl_{i+1,j}}^{\Sr_{i+1,j}} D_t\sim \sum \big( S_{i-j+2q} + S_{i-j+2q+1}\big)
\end{equation} where $q\in [0,p-i-2]\cap [j-i,j]$. To check this, we make use of the known fact about the decomposition numbers of the principal block of $\F \sym{p}$. We prove the case $j< (p-1)/2$ and leave the other case to the reader. Even in this case, there are a few cases for $i$ and we only demonstrate the cases when $i=j$, $i\in [j+1,p-3-j]$ and $i\in [p-1-j,p-1]$. 

Suppose that $i=j$. We have $\Sl_{i,j}=0$, $\Sr_{i,j}=2j$, $\Sl_{i+1,j}=0$ and $\Sr_{i+1,j}=2j+1$. In this case, $q\in [0,j]$. The right-hand side of Equation \ref{Eq:DSpecht} reads \[\sum_{k=0}^{2j+1} S_k\sim \sum_{k=0}^{2j} D_k+ \sum_{k=0}^{2j+1}D_k.\] 

Suppose that  $i\in [j+1,p-3-j]$. We have $\Sl_{i,j}=i-j-1$, $\Sr_{i,j}=i+j$, $\Sl_{i+1,j}=i-j$ and $\Sr_{i+1,j}=i+j+1$. In this case, $q\in [0,j]$. The right-hand side of Equation \ref{Eq:DSpecht} reads \[\sum_{k=i-j}^{i+j+1} S_k\sim \sum_{k=i-j-1}^{i+j} D_k+ \sum_{k=i-j}^{i+j+1}D_k.\] 

Suppose that $i\in [p-1-j,p-1]$. We have $\Sl_{i,j}=i-j-1$, $\Sr_{i,j}=2p-j-3-i$, $\Sl_{i+1,j}=i-j$ and $\Sr_{i+1,j}=2p-j-4-i$. In this case, $q\in [0,p-i-2]$. The right-hand side of Equation \ref{Eq:DSpecht} reads \[\sum_{k=i-j}^{2p-i-j-3} S_k\sim \sum_{k=i-j-1}^{2p-i-j-4} D_k+ \sum_{k=i-j}^{2p-i-j-3}D_k\sim \sum_{k=i-j}^{2p-i-j-4} D_k+ \sum_{k=i-j-1}^{2p-i-j-3}D_k.\] 
\end{proof}

The preceding lemma allows us to understand $S_i\otimes D_j$ better. The following corollary shows that the following three objects are isomorphic: the core of $S_i\otimes D_j$, $(S_i\otimes D_j)_0$ and the syzygy $\Omega^i(D_j)$.

\begin{cor}\label{cor:0block&Heller} Let $p\ge 3$ and $0\le i\le 2p-3$. Then \[\Omega^i(D_j)\cong \left \{\begin{array}{ll} (S_i\otimes D_j)_0&\text{if $i\in [0,p-1]$,}\\ (S_{2p-2-i}^*\otimes D_j)_0&\text{if $i\in [p,2p-3]$.}\end{array}\right .\] 
\end{cor}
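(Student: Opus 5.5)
We already know from Corollary \ref{cor: SiDj is Heller} that $\Omega^i(D_j)\cong\Omega^0(S_i\otimes D_j)$ for $i\in[0,p-1]$, and $\Omega^i(D_j)\cong\Omega^0(S^*_{2p-2-i}\otimes D_j)$ for $i\in[p,2p-3]$, and that this core is a nonzero indecomposable module. The task is therefore to show that the core equals the principal block component. In other words, $(S_i\otimes D_j)_0$ should have no projective summands, and the non-principal block components of $S_i\otimes D_j$ should be projective.

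The second point is immediate. Every block of $\F\sym{p}$ other than $\block_0$ is labelled by a non-hook partition $\lambda$ that is its own $p$-core. Such a block is simple, being the block of $D^\lambda\cong S^\lambda\cong \Proj(D^\lambda)$. Hence every module lying in a non-principal block is projective, and $\Omega^0(S_i\otimes D_j)=\Omega^0((S_i\otimes D_j)_0)$. The same holds with $S^*_{2p-2-i}$ in place of $S_i$.

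It remains to exclude projective summands of $(S_i\otimes D_j)_0$. I would use the composition factors from Lemma \ref{lem:compSiDj}: $(S_i\otimes D_j)_0\sim\sum_{t=\Sl_{i,j}}^{\Sr_{i,j}}D_t$, where each simple module appears at most once when $i\le j$ or $i\ge \ldots$. More precisely, for each $t$ the multiplicity of $D_t$ is exactly one, since $\Sl_{i,j}\le \Sr_{i,j}$ and the sum runs over an interval. On the other hand, each indecomposable projective $P_t$ in $\block_0$ has $D_t$ as a composition factor with multiplicity two, as its head and its socle. So if $(S_i\otimes D_j)_0$ had a summand $P_t$, then $D_t$ would occur at least twice in it, a contradiction. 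Thus $(S_i\otimes D_j)_0$ is projective-free and equals its core, which is $\Omega^i(D_j)$.

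For $i\in[p,2p-3]$, I would apply duality. Since $D_j$ is self-dual, $S^*_{2p-2-i}\otimes D_j\cong(S_{2p-2-i}\otimes D_j)^*$, and taking duals commutes with block components because $\block_0$ is self-dual. So $(S^*_{2p-2-i}\otimes D_j)_0\cong((S_{2p-2-i}\otimes D_j)_0)^*$. This module has the same multiplicity-free composition factors, so the same argument shows it has no projective summands.

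The only step needing care is the multiplicity-freeness: one must check that the summation in Lemma \ref{lem:compSiDj} has each index exactly once. This is evident from the formula, so I expect no real obstacle beyond correctly invoking the simplicity of the non-principal blocks.
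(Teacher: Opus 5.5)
Your proposal is correct and follows the paper's proof. You reduce to Corollary \ref{cor: SiDj is Heller}, then use the multiplicity-freeness from Lemma \ref{lem:compSiDj}, against $D_t$ occurring twice in each $P_t$, to rule out projective summands of the principal-block component; the case $i\in[p,2p-3]$ goes the same way, since $(S^*_{2p-2-i}\otimes D_j)_0$ has the same composition factors as $(S_{2p-2-i}\otimes D_j)_0$. You also spell out why the non-principal block components are projective (those blocks have defect zero), which the paper leaves implicit.
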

\begin{proof} Suppose first that $0\le i\le p-1$. Lemma \ref{lem:compSiDj} asserts that $V:=(S_i\otimes D_j)_0$ is multiplicity-free. Since this is not the case for the projective modules in the principal block of $\F\sym{p}$, we have $\Omega^0(V)\cong V$. Therefore, $V\cong \Omega^i(D_j)$ using Corollary \ref{cor: SiDj is Heller}. For $i\ge p$, we have $(S_{2p-2-i}^*\otimes D_j)_0\sim (S_{2p-2-i}\otimes D_j)_0$. Again, $V':=(S_{2p-2-i}^*\otimes D_j)_0$ is multiplicity-free and hence it has no projective summand. Thus, $V'\cong \Omega^0(V')\cong \Omega^i(D_j)$.
\end{proof}

\section{The stable Green ring of $\F\sym{p}$}

In this section, we prove Theorem \ref{T:StableGreenRing}, the main result of our paper. It gives us a closed form for the tensor product of any two indecomposable non-projective $\F\sym{p}$-modules. As such, we obtain the structure for the stable Green ring for $\F\sym{p}$. As a corollary, we get the classical results regarding the radical structures of the syzygies of the simple modules. 

We first begin with the tensor product of two simple modules. Recall the notation $\Rect(i,j)$ from Definition \ref{D:Rect}.

\begin{lem}\label{L: DiDj semisimple} Let $i,j\in [0, p-2]$. Then \[\Omega^0(D_i\otimes D_j)\cong (D_i\otimes D_j)_0\cong \bigoplus_{t\in\Rect(i,j)} D_t.\] 
\end{lem}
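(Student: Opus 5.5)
The plan is to first determine the composition factors of $(D_i\otimes D_j)_0$ from Lemma \ref{lem:compSiDj}, then use a parity argument on the Ext-quiver to get semisimplicity, and finally read off the core.

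\emph{Reduction to the principal block.} Every block of $\F\sym{p}$ other than $\block_0$ has defect zero: as recalled in the preliminaries, $D^\lambda\cong S^\lambda\cong \Proj(D^\lambda)$ there. So the non-principal part of $D_i\otimes D_j$ is projective, and $\Omega^0(D_i\otimes D_j)\cong \Omega^0((D_i\otimes D_j)_0)$. It therefore suffices to show that $(D_i\otimes D_j)_0\cong\bigoplus_{t\in\Rect(i,j)}D_t$ and that this module has no projective summand. The second point is immediate once the first is known. Each $P_t$ with $p\ge 3$ has Loewy length $3$, so it is not semisimple and cannot be a summand of a semisimple module. The case $p=2$ is trivial, since then $\block_0$ contains only $D_0$.

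\emph{Composition factors.} For $i\in[1,p-2]$ we have $S_i\sim D_i+D_{i+1}$, and $S_0=D_0$. By induction this gives the identity
\[
D_i \sim \sum_{k=0}^{i}(-1)^k S_{i-k}
\]
in the Grothendieck group. Tensoring with $D_j$ is exact and commutes with projection onto $\block_0$. Hence, in the Grothendieck group, $[(D_i\otimes D_j)_0]=\sum_{k=0}^{i}(-1)^k[(S_{i-k}\otimes D_j)_0]$. By Lemma \ref{lem:compSiDj}, each term is the interval $D_{\Sl_{i-k,j}}+\cdots+D_{\Sr_{i-k,j}}$.

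I would then evaluate this alternating sum of intervals in the $j$-diagram. Going down one layer, the left endpoint $\Sl$ moves by $\pm1$ and the right endpoint $\Sr$ also moves by $\pm1$. So consecutive intervals differ only at their ends, and the telescoping sum is a union of every other point. Concretely, the contributions at the left end come from the alternating sum of $\Sl_{i-k,j}$-boundary terms and produce the points from $\Dl_{i,j}$ in steps of $2$; symmetrically on the right. The result should be exactly $\sum_{t\in\Rect(i,j)}D_t$, each with multiplicity one. This bookkeeping, organised by cases on the position of $i$ relative to $j$ and $p-2-j$ (just as in the proof of Lemma \ref{lem:compSiDj}), is the step I expect to be the main obstacle. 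I would handle it by checking the recursion
\[
[(D_i\otimes D_j)_0]+[(D_{i-1}\otimes D_j)_0]=[(S_{i-1}\otimes D_j)_0],
\]
that is, that $\Rect(i,j)\sqcup\Rect(i-1,j)$ is precisely the interval $[\Sl_{i-1,j},\Sr_{i-1,j}]$. This is a clean statement about the $j$-rectangle: two consecutive layers, taken at every other point, fill out the blue segment between them.

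\emph{Semisimplicity.} All indices in $\Rect(i,j)$ have the same parity. The Ext-quiver of $\block_0$ only has arrows $t\to t\pm1$, so $\Ext^1(D_t,D_{t'})=0$ for any two composition factors $D_t,D_{t'}$ of $(D_i\otimes D_j)_0$. Lemma \ref{lem:Ext1}(ii) then shows that $(D_i\otimes D_j)_0$ is semisimple. Therefore $(D_i\otimes D_j)_0\cong\bigoplus_{t\in\Rect(i,j)}D_t$, which has no projective summand, and this completes the argument.
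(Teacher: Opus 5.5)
Your plan is the paper's own: composition factors of $(D_i\otimes D_j)_0$ by induction on $i$ from Lemma~\ref{lem:compSiDj}, then semisimplicity from parity and Lemma~\ref{lem:Ext1}(ii). The reduction to $\block_0$ and the non-projectivity remark are also fine. However, the recursion you propose to verify, which is the heart of the induction, is off by one and false as stated.

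The error starts with the relation $S_i\sim D_i+D_{i+1}$, which is wrong. The sentence in Section~2 asserting it is a misprint: for $i=p-2$ it would involve $D_{p-1}$, which does not exist, and the paper's proof of this lemma uses $S_{i+1}\sim D_{i+1}+D_i$. For $i\in[1,p-2]$ the socle of $S_i$ is $D_{i-1}$. For example, $S_1$ contains the trivial module, as in the proof of Lemma~\ref{lem: resSimple}, and Corollary~\ref{C:OmegaiDjStructure} uses a submodule $W\cong D_{i-1}$.

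Your identity $D_i\sim\sum_{k=0}^{i}(-1)^kS_{i-k}$ is correct, but it holds precisely because $S_i\sim D_i+D_{i-1}$. Subtracting the cases $i$ and $i-1$ gives $[(D_i\otimes D_j)_0]+[(D_{i-1}\otimes D_j)_0]=[(S_i\otimes D_j)_0]$, not $[(S_{i-1}\otimes D_j)_0]$. Your combinatorial statement $\Rect(i,j)\sqcup\Rect(i-1,j)=[\Sl_{i-1,j},\Sr_{i-1,j}]$ already fails for $j=0$. There $\Rect(i,0)=\{i\}$, so the left side is $\{i-1,i\}$, whereas the right side is $\{0\}$ for $i=1$ and $\{i-2,i-1\}$ for $i\ge 2$. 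So the induction cannot be closed in the form you set up.

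What must be checked is $\Rect(i,j)\sqcup\Rect(i-1,j)=[\Sl_{i,j},\Sr_{i,j}]$ for $i\in[1,p-2]$; this is what the paper checks, with $i+1$ in place of $i$. You also left this bookkeeping undone. It is quick once you note that $\Dl_{i,j}=|i-j|$ and $\Dr_{i,j}=\min\{i+j,\,2p-4-i-j\}$. Between consecutive layers, $\Dl$ and $\Dr$ each change by exactly one. Hence $\Rect(i,j)$ and $\Rect(i-1,j)$ are interleaved progressions of opposite parity, and one checks directly that $\Sl_{i,j}=\min\{\Dl_{i,j},\Dl_{i-1,j}\}$ and $\Sr_{i,j}=\max\{\Dr_{i,j},\Dr_{i-1,j}\}$.

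Your heuristic that $\Sl$ and $\Sr$ move by $\pm1$ between layers is also inaccurate. We have $\Sl_{j,j}=\Sl_{j+1,j}=0$ and $\Sr_{p-2-j,j}=\Sr_{p-1-j,j}=p-2$; it is $\Dl$ and $\Dr$ that always move by one. With the index corrected and this check supplied, your argument coincides with the paper's.
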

\begin{proof} We just need to prove the second isomorphism in the statement and the first isomorphism follows since the simple modules in $b_0$ are non-projective. Fix $j$. We first claim that \[(D_i\otimes D_j)_0\sim \sum_{t\in\Rect(i,j)} D_t\sim \sum_{k=0}^{m} D_{\Dl_{i,j}+2k}\sim (D_{\Dl_{i,j}}+D_{\Dl_{i,j}+2}+\cdots+D_{\Dr_{i,j}}).\] We prove by induction on $i$. The case when $i=0$ is obvious because $\Dl_{i,j}=j=\Dr_{i,j}$. Let $i\in [1, p-3]$. To complete our proof, since $S_{i+1}\sim D_{i+1}+D_i$, by Lemma \ref{lem:compSiDj}, it suffices to check that 
\begin{equation}\label{Eq:DSpecht2}
  \sum_{t=\Sl_{i+1,j}}^{\Sr_{i+1,j}} D_t\sim (D_{\Dl_{i+1,j}}+D_{\Dl_{i+1,j}+2}+\cdots+D_{\Dr_{i+1,j}})+ (D_{\Dl_{i,j}}+D_{\Dl_{i,j}+2}+\cdots+D_{\Dr_{i,j}})
\end{equation} for some $m$. For this, since $\Dl_{i+1,j}=\Dl_{i,j}\pm 1$ and $\Dr_{i+1,j}=\Dr_{i,j}\pm 1$, in view of the right-hand side of Equation \ref{Eq:DSpecht2}, we only need to verify that $\Sl_{i+1,j}=\min\{\Dl_{i,j},\Dl_{i+1,j}\}$ and $\Sr_{i+1,j}=\max\{\Dr_{i,j},\Dr_{i+1,j}\}$. However, our last assertions are clearly correct. Thus, we obtain the composition factors of $(D_i\otimes D_j)_0$ as claimed. Since $\Ext^1_{\F\sym{p}}(D_t,D_{t+2})=0$ for every $t\in [0,p-4]$, by Lemma \ref{lem:Ext1}(ii), $(D_i\otimes D_j)_0$ is semisimple. Our proof is now complete. 
\end{proof}

\begin{thm}\label{T:StableGreenRing} Let $i,j,k,\ell\in [0,p-2]$. Then $\Omega^k(D_i)\otimes \Omega^\ell(D_j)\cong \bigoplus_{t\in\Rect(i,j)}\Omega^{k+\ell}(D_t)$.
\end{thm}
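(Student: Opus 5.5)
The isomorphism is to be read in the stable category, that is, modulo projective summands. The plan is to reduce to Lemma \ref{L: DiDj semisimple} using the standard compatibility of Heller translates with tensor products. For any $\F G$-modules $U,V$ and any integer $n$, we have $\Omega^n(U)\otimes V\cong \Omega^n(U\otimes V)\oplus(\text{projective})$. This holds because tensoring the minimal projective resolution (or injective coresolution) of $U$ with $V$ gives an exact sequence of the same length whose middle terms are projective, as $P\otimes V$ is projective for projective $P$. The same fact was already used in the proof of Corollary \ref{cor: SiDj is Heller}.

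First I apply this twice, once in each factor:
\[\Omega^k(D_i)\otimes\Omega^\ell(D_j)\cong \Omega^k\bigl(D_i\otimes\Omega^\ell(D_j)\bigr)\oplus(\text{proj})\cong\Omega^{k+\ell}(D_i\otimes D_j)\oplus(\text{proj}).\]
This uses $\Omega^k\Omega^\ell=\Omega^{k+\ell}$ on non-projective parts and the fact that $\Omega^k$ kills projective summands. Next, Lemma \ref{L: DiDj semisimple} gives $D_i\otimes D_j\cong\bigoplus_{t\in\Rect(i,j)}D_t\oplus Q$, where $Q$ is projective. Since $\Omega$ commutes with finite direct sums and annihilates projectives, it follows that
\[\Omega^{k+\ell}(D_i\otimes D_j)\cong\bigoplus_{t\in\Rect(i,j)}\Omega^{k+\ell}(D_t).\]

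One point needs care. The non-projective summands of $D_i\otimes D_j$ outside $\block_0$ must not contribute. Every block other than $\block_0$ is labelled by a $p$-core partition of $p$, so its modules are projective ($D^\lambda\cong S^\lambda\cong\Proj(D^\lambda)$). Hence the core of $D_i\otimes D_j$ equals $(D_i\otimes D_j)_0$, which is exactly what the first isomorphism of Lemma \ref{L: DiDj semisimple} states. Each $\Omega^{k+\ell}(D_t)$ is nonzero and indecomposable by Corollary \ref{cor: SiDj is Heller}, so the right-hand side is a genuine decomposition into non-projective indecomposables.

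The main subtlety is the bookkeeping that turns ``$\cong$ modulo projectives'' into the statement: the theorem should be read with projective summands discarded, i.e. as an identity of cores or an identity in the stable Green ring. I would state this explicitly and note that Krull--Schmidt makes the core well defined. Negative indices cause no extra difficulty, since $\Omega^{-1}$ also commutes with $-\otimes V$ up to projectives, via injective hulls, which are projective for group algebras.
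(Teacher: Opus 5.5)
Your proposal is correct and is essentially the paper's argument: the paper also combines the identity $\Omega^0(\Omega^k(U)\otimes\Omega^\ell(V))\cong\Omega^{k+\ell}(U\otimes V)$ with Lemma~\ref{L: DiDj semisimple}, with the isomorphism read modulo projectives. You spell out two things the paper leaves implicit: why that identity holds (tensor a projective resolution with $V$), and why blocks other than $\block_0$ contribute only projective summands.
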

\begin{proof} The isomorphism is obtained using $\Omega^0(\Omega^k(U)\otimes \Omega^\ell(V))\cong \Omega^{k+\ell}(U\otimes V)$ and Lemma \ref{L: DiDj semisimple}. 
\end{proof}

For a finite group $G$ with a cyclic Sylow $p$-subgroup, in \cite[\S5]{Janusz:1966},  Janusz constructed all the indecomposable modules for $\F G$. These modules are more generally known as the string modules for Brauer tree algebras. Applied to our case, they are the syzygies of the simple modules. We now use our Theorem \ref{T:StableGreenRing} to prove this fact. 

\begin{cor}\label{C:OmegaiDjStructure} Let $j\in [0,p-2]$ and $i\in [0,2p-3]$. We have
\begin{enumerate}[(i)]
  \item if $i\in [0,p-2]$, then $\Omega^i(D_j)$ has radical layers isomorphic with $\bigoplus_{t\in \Rect(i,j)}D_t$ and $\bigoplus_{t\in \Rect(i-1,j)}D_t$ from top to bottom,
  \item if $i\in [p-1,2p-3]$, then $\Omega^i(D_j)$ has radical layers isomorphic with $\bigoplus_{t\in \Rect(i-p+1,p-2-j)}D_t$ and $\bigoplus_{t\in \Rect(i-p,p-2-j)}D_t$ from top to bottom;
\end{enumerate} with the convention $\Rect(-1,*)=\varnothing$. In particular, all indecomposable non-projective $\F\sym{p}$-modules have period $2p-2$. Furthermore, the complete set of indecomposable $\F\sym{p}$-modules is given by 
\[\{\Omega^i(D_j):i\in [0,p-2],\ j\in[0,p-2]\}\cup \{P_j:j\in [0,p-2]\}\cup \{D^\mu:\mu\not\in \block_0\}.\]
\end{cor}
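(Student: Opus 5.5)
We would prove part (i) by induction on $i$, using the short exact sequences $0\to\Omega^{i+1}(D_j)\to \Proj(\Omega^i(D_j))\to\Omega^i(D_j)\to 0$. Throughout we use that $\Omega^i(D_j)$ is a non-projective indecomposable module in $\block_0$ with composition factors $D_{\Sl_{i,j}},\dots,D_{\Sr_{i,j}}$, each occurring once. This follows from Corollary \ref{cor:0block&Heller} and Lemma \ref{lem:compSiDj}.

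For $i=0$ part (i) is trivial. Suppose the radical layers of $\Omega^i(D_j)$ are $\bigoplus_{t\in\Rect(i,j)}D_t$ (top) and $\bigoplus_{t\in\Rect(i-1,j)}D_t$ (socle). The projective cover is $\bigoplus_{t\in\Rect(i,j)}P_t$. From the radical layers of $P_t$, the module $\Omega^{i+1}(D_j)$ has Loewy length at most $2$. Its socle is $\bigoplus_{t\in\Rect(i,j)}D_t$, which is the socle of the projective cover. Its remaining composition factors are obtained by subtracting the composition factors of $\Omega^i(D_j)$ from those of $\bigoplus_{t\in \Rect(i,j)}P_t$.

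A bookkeeping check with the $j$-diagram should show that this difference is exactly $\bigoplus_{t\in\Rect(i+1,j)}D_t$. The point is that the neighbours $t\pm1$ of $\Rect(i,j)$, taken with multiplicity, are $\Rect(i+1,j)\cup\Rect(i-1,j)$ whenever these lie inside the $j$-rectangle. Since $\Omega^{i+1}(D_j)$ is indecomposable and not simple, it cannot be semisimple, and its socle must be the whole layer $\Rect(i,j)$. Then $\Rad(\Omega^{i+1}(D_j))$ must equal the socle, because no $D_t$ with $t\in\Rect(i+1,j)$ can lie in the socle. This gives the layers for $\Omega^{i+1}(D_j)$.

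For $i\ge p-1$ we first compute $\Omega^{p-1}(D_j)$. By Corollary \ref{cor:0block&Heller} it equals $(S_{p-1}\otimes D_j)_0=D_j\otimes\sgn$. By Lemma \ref{L: DiDj semisimple} with $i=p-2$ (noting $D_{p-2}=\sgn$), $\Rect(p-2,j)=\{p-2-j\}$, so $\Omega^{p-1}(D_j)\cong D_{p-2-j}$. Part (ii) then follows from part (i) applied to $D_{p-2-j}$ with index $i-p+1\in[0,p-2]$. The main obstacle is the case analysis of the neighbour count at the corners of the rectangle, where $\Dl$ or $\Dr$ hits $0$ or $p-2$ and the layer is truncated. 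There $P_0$ and $P_{p-2}$ have only one middle neighbour, and this is exactly what keeps the count consistent.

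Periodicity follows because $\Omega^{2p-2}(D_j)\cong\Omega^{p-1}(D_{p-2-j})\cong D_j$. By Theorem \ref{T:StableGreenRing}, every $\Omega^i(D_j)$ is therefore periodic of period dividing $2p-2$. The period is exactly $2p-2$: for $i\in[1,2p-3]$, $\Omega^i(D_j)$ is either non-simple (for $i\ne p-1$ it has two radical layers, by (i) and (ii)) or equal to $D_{p-2-j}\neq D_j$ at $i=p-1$; the single exception is $p-2-j=j$, where a separate look shows the period is still $2p-2$ via Lemma \ref{lem: limtan}.

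Distinctness for $i\in[0,2p-3]$ comes from comparing tops and socles using (i) and (ii). This gives $(p-1)(2p-2)$ listed pairs, but each module is counted twice by the $2p-2$ symmetry. So the list in the statement, $\{\Omega^i(D_j):i\in[0,p-2]\}$ together with the $P_j$, has $(p-1)^2+(p-1)=p(p-1)$ members. By Theorem \ref{T:NumInd} this is the complete list for $\block_0$. The other blocks are simple projective, which accounts for the $D^\mu$ with $\mu\notin\block_0$.
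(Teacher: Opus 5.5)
Your proof is correct. For part (ii), the periodicity and the final count you follow the paper: you reduce to $\Omega^{p-1}(D_j)\cong D_{p-2-j}$ and then compare with Theorem~\ref{T:NumInd}.

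\textbf{Part (i) is a different route.} The paper does not induct on $i$. It tensors $0\to D_{i-1}\to S_i\to D_i\to 0$ with $D_j$ and takes $\block_0$-components. By Lemma~\ref{L: DiDj semisimple}, the submodule and quotient of $(S_i\otimes D_j)_0$ are $\bigoplus_{t\in\Rect(i-1,j)}D_t$ and $\bigoplus_{t\in\Rect(i,j)}D_t$, so $\Rad$ lies in the former. Equality then comes from Lemma~\ref{lem:Ext1}, since the simples indexed by $\Rect(i-1,j)$ are pairwise Ext-orthogonal. The identification $\Omega^i(D_j)\cong(S_i\otimes D_j)_0$ finishes the argument.

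You instead walk down the minimal projective resolution, using only self-injectivity, the Loewy structure of the $P_t$, and composition-factor counts. The paper's argument is one-shot and exhibits the radical filtration as the Specht filtration tensored with $D_j$, which fits its tensor-product theme. Yours is the classical Brauer-tree argument and has three advantages. It does not use the semisimplicity part of Lemma~\ref{L: DiDj semisimple} for (i). It gives the minimal projective resolutions of the next corollary for free. It could even be made independent of Section~3: running the neighbour count one step further leaves only $D_{p-2-j}$ in $\Omega^{p-1}(D_j)$.

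The bookkeeping you defer is already in the paper. The proof of Lemma~\ref{L: DiDj semisimple} shows $\sum_{t=\Sl_{i+1,j}}^{\Sr_{i+1,j}}D_t\sim\sum_{t\in\Rect(i+1,j)}D_t+\sum_{t\in\Rect(i,j)}D_t$. Together with Lemma~\ref{lem:compSiDj}, this gives the composition factors of $\Omega^{i+1}(D_j)$ with no corner analysis.

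\textbf{Local repairs.} First, your stated reason for $\Rad(\Omega^{i+1}(D_j))=\mathrm{soc}(\Omega^{i+1}(D_j))$ is not the operative one. Knowing that no $D_t$ with $t\in\Rect(i+1,j)$ is in the socle only places those factors in the head. What you must rule out is some $D_s$ with $s\in\Rect(i,j)$ lying in the head. Since $\Rad\subseteq\mathrm{soc}$ by Loewy length, a complement of $\Rad$ inside the semisimple socle would map injectively to the head and split off, contradicting indecomposability.

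Second, the socle equality itself needs a one-line justification. For a module $M$ without projective summands over a self-injective algebra, $\Omega(M)\subseteq\Proj(M)$ is an injective hull.

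Third, the case $p-2-j=j$ never arises, because $p\ge 3$ is odd and so $p-2$ is odd. No appeal to Lemma~\ref{lem: limtan} is needed. For $p=2$ the case does arise, but then $D_0$ has period $1$, so your claimed conclusion would be false there.

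Fourth, distinctness over $i\in[0,2p-3]$ is false as written, since $\Omega^{i+p-1}(D_j)\cong\Omega^i(D_{p-2-j})$. What you need is distinctness for $i,j\in[0,p-2]$. Tops and socles do give this: the endpoints $\Dl_{i,j}=|i-j|$ and $\Dr_{i,j}=p-2-|p-2-i-j|$, together with those of $\Rect(i-1,j)$, determine $i-j$ and $i+j$. The paper's shortcut is simpler. If $\Omega^i(D_j)\cong\Omega^k(D_\ell)$ with $i\ge k$, then $\Omega^{i-k}(D_j)\cong D_\ell$ is simple, which forces $i=k$ by (i).
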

\begin{proof} Suppose first that $i\in [0,p-2]$. The module $S_i$ has a submodule $W\cong D_{i-1}$ such that $S_i/W\cong D_i$. Tensoring with $D_j$, we have \[(S_i\otimes D_j)_0/(W\otimes D_j)_0\cong (D_i\otimes D_j)_0.\] By Lemma \ref{L: DiDj semisimple}, both $(W\otimes D_j)_0$ and $(D_i\otimes D_j)_0$ are semisimple, and $\Ext^1_{\F\sym{p}}(T,T')=0$ for any two composition factors $T,T'$ of $(W\otimes D_j)_0$. In particular, $\Rad((S_i\otimes D_j)_0)\subseteq (W\otimes D_j)_0$. Furthermore, by Lemma \ref{lem:compSiDj}, $(S_i\otimes D_j)_0$ is multiplicity-free. Applying Lemma \ref{lem:Ext1} gives us $\Rad((S_i\otimes D_j)_0)=(W\otimes D_j)_0$. Using Corollary \ref{cor:0block&Heller}, part (i) follows. Suppose now that $i\in [p,2p-2]$. We have \[\Omega^i(D_j)\cong \Omega^{i-p+1}\Omega^{p-1}(D_j)\cong \Omega^{i-p+1}(D_{p-2-j}).\] Part (ii) now follows using part (i). 

When $\Omega^i(D_j)$ is simple, we have $i=0,p-1$. But $\Omega^{p-1}(D_j)\cong D_{p-2-j}\not\cong D_j$ and $\Omega^{2p-2}(D_j)\cong D_j$. It follows that the period of $D_j$ is $2p-2$. If $\Omega^i(D_j)\cong \Omega^k(D_\ell)$ for some $i,j,k,\ell\in [0,p-2]$ with $i\ge k$, then $\Omega^{i-k}(D_j)\cong D_\ell$. Thus, $i-k=0$ and $j=\ell$. By Theorem \ref{T:NumInd}, the modules in our statement give us the correct number of non-isomorphic indecomposables. Since every indecomposable non-projective $\F\sym{p}$-module is a syzygy of a simple module and each simple module has period $2p-2$, it also has period $2p-2$.
\end{proof}

\begin{eg} Let $p=5$. We have the following Auslander-Reiten quiver for $\F\sym{5}$.
\[
\begin{tikzpicture}[xscale=1.7, yscale=1.5]
\foreach \i in {1,3,5,7}
{\draw[->, shorten >=15pt, shorten <=15pt] (\i,1)--(\i+1,0);
\draw[->, shorten >=15pt, shorten <=15pt] (\i,3)--(\i+1,2);
\draw[->, shorten >=15pt, shorten <=15pt] (\i,1)--(\i+1,2);}

\foreach \i in {2,4,6}
{\draw[->, shorten >=15pt, shorten <=15pt] (\i,0)--(\i+1,1);
\draw[->, shorten >=15pt, shorten <=15pt] (\i,2)--(\i+1,1);
\draw[->, shorten >=15pt, shorten <=15pt] (\i,2)--(\i+1,3);}

\foreach \i/\j in {2/0, 6/0, 2/2, 6/2}
{\draw[->, shorten >=15pt, shorten <=20pt] (\i,\j)--(\i+1,\j);}

\foreach \i/\j in {3/0, 7/0, 3/2, 7/2}
{\draw[->, shorten >=25pt, shorten <=10pt] (\i,\j)--(\i+1,\j);}

\foreach \i in {0,8}
{\draw[->, shorten >=15pt, shorten <=20pt,dashed] (\i,2)--(\i+1,3);
\draw[->, shorten >=15pt, shorten <=20pt,dashed] (\i,2)--(\i+1,1);
\draw[->, shorten >=15pt, shorten <=20pt,dashed] (\i,0)--(\i+1,1);}

\foreach \i/\j/\label in {1/3/\Omega^6(D_0), 3/3/\Omega^4(D_0)=D_3, 5/3/\Omega^2(D_0), 7/3/D_0, 2/2/\Omega^5(D_1), 3/2/P_2, 4/2/\Omega^3(D_1), 6/2/\Omega(D_1), 7/2/P_1, 8/2/\Omega^{-1}(D_1), 1/1/\Omega^2(D_1), 3/1/D_1, 5/1/\Omega^{-2}(D_1), 7/1/\Omega^{-4}(D_1)=D_2, 2/0/\Omega(D_0), 3/0/P_0, 4/0/\Omega^{-1}(D_0), 6/0/\Omega^{-3}(D_0), 7/0/P_3, 8/0/\Omega^{-5}(D_0)
}
{\node at (\i,\j) {$\label$};}

\end{tikzpicture}
\]
\end{eg}

In \cite[Theorem 1]{Alperin/Janusz:1973}, Alperin and Janusz determined the minimal projective resolution of the trivial module for any finite group with cyclic Sylow $p$-group. In the special case of $\F\sym{p}$, the above corollary gives us the minimal projective resolution of all indecomposable non-projective $\F\sym{p}$-modules.

\begin{cor} Let $i,j\in [0,p-2]$ and $n\in\Z_{\geq 0}$. Suppose that $n+i\equiv \ell(\mod 2p-2)$ where $\ell\in [0,2p-3]$. Then we have the minimal projective resolution \[\cdots\to Q_2\to Q_1\to Q_0\to \Omega^i(D_j)\to 0\] where $Q_n$ is isomorphic to $\bigoplus_{t\in\Rect(\ell,j)}P_t$ if $\ell\in [0,p-2]$ and $\bigoplus_{t\in\Rect(\ell-p+1,p-2-j)}P_t$ if $\ell\in [p-1,2p-3]$.
\end{cor}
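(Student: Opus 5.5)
The plan is to read off the minimal projective resolution directly from the radical structure of the syzygies in Corollary \ref{C:OmegaiDjStructure}. For a non-projective indecomposable module $M$ over a self-injective algebra, the minimal projective resolution has $n$-th term $\Proj(\Omega^n(M))$, and $\Proj(N)=\bigoplus \Proj(T)$ where $T$ runs over the summands of the head $N/\Rad(N)$. With $M=\Omega^i(D_j)$ we have $\Omega^n(M)\cong\Omega^{n+i}(D_j)$. Since every $D_j$ has period $2p-2$ by Corollary \ref{C:OmegaiDjStructure}, this is isomorphic to $\Omega^\ell(D_j)$, where $\ell\equiv n+i \pmod{2p-2}$ and $\ell\in[0,2p-3]$. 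So $Q_n\cong \Proj(\Omega^\ell(D_j))$, and it remains to identify the head of $\Omega^\ell(D_j)$.

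Corollary \ref{C:OmegaiDjStructure} splits into two cases.
\begin{itemize}
\item If $\ell\in[0,p-2]$, part (i) says that the top radical layer of $\Omega^\ell(D_j)$ is $\bigoplus_{t\in\Rect(\ell,j)}D_t$. Hence $Q_n\cong\bigoplus_{t\in\Rect(\ell,j)}P_t$.
\item If $\ell\in[p-1,2p-3]$, part (ii) gives the head $\bigoplus_{t\in\Rect(\ell-p+1,p-2-j)}D_t$. Hence $Q_n\cong\bigoplus_{t\in\Rect(\ell-p+1,p-2-j)}P_t$.
\end{itemize}
Here one should note that $\Omega^\ell(D_j)$ is non-projective and indecomposable, so it has no projective summand. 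The labels $t$ all lie in $[0,p-2]$ because $\Rect(\cdot,\cdot)$ consists of grid points inside the $j$-rectangle. So each $P_t$ is a projective indecomposable in $\block_0$.

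To justify minimality, build the resolution inductively. The map $Q_0=\Proj(\Omega^i(D_j))\twoheadrightarrow\Omega^i(D_j)$ is a projective cover with kernel $\Omega^{i+1}(D_j)$. Splicing the projective covers $Q_n\twoheadrightarrow\Omega^{n+i}(D_j)$ gives a resolution in which each kernel lies in the radical, which is exactly minimality.

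The only point needing care is the index bookkeeping in part (ii). Corollary \ref{C:OmegaiDjStructure} states (ii) for $\ell\in[p-1,2p-3]$ via $\Omega^\ell(D_j)\cong\Omega^{\ell-p+1}(D_{p-2-j})$. One checks that $\ell-p+1\in[0,p-2]$, so that part (i) applies to $D_{p-2-j}$, and also that the boundary value $\ell=p-1$ gives $\Rect(0,p-2-j)=\{p-2-j\}$, consistent with $\Omega^{p-1}(D_j)\cong D_{p-2-j}$. Beyond this, nothing beyond the corollary is needed.
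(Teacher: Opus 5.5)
Your proposal is correct and follows the paper's own argument. In both, $Q_n$ is the projective cover of $\Omega^{n+i}(D_j)\cong\Omega^\ell(D_j)$, by periodicity $2p-2$, and its head is read off from Corollary \ref{C:OmegaiDjStructure}. Your remarks on minimality via splicing projective covers, and your index check that $\ell-p+1\in[0,p-2]$ in case (ii), spell out details the paper leaves implicit.
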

\begin{proof} Since $Q_n$ is the projective cover of $\Omega^{n+i}(D_j)\cong \Omega^\ell(D_j)$ and is determined by the head of $\Omega^\ell(D_j)$, the result follows from Corollary \ref{C:OmegaiDjStructure}.  
\end{proof}

Let $A=\F\sym{p}$ and $A^e=A\otimes A^{\text{op}}$ be the enveloping algebra of $A$. As another application, we draw a connection with the minimal projective resolution of the $A^e$-module $A$, which in turn relates to the Hochschild cohomology of $A$. For this, recall that the principal indecomposable $A^e$-modules are $P(\lambda,\mu)=P^\lambda\otimes P^\mu$ where $\lambda,\mu$ run through all $p$-regular partitions of $p$. In general, the tensor products of projective indecomposables of a finite-dimensional algebra over an algebraically closed field yield a complete set of projective indecomposables for the enveloping algebra (see \cite[Proposition 1.1]{kuelshammer}).

\begin{cor} Let $A=\F\sym{p}$. The minimal projective resolution of the $A^e$-module $A$ is given by \[\cdots\to R_n\to\cdots\to R_1\to R_0\to A\to 0\] where the number $r^n_{\lambda,\mu}$ of $P(\lambda,\mu)$ appearing as a direct summand of $R_n$ is given as follows.
	\begin{enumerate}[(i)]
		\item If $n=0$, $r_{\lambda,\mu}^0=\delta_{\lambda,\mu}$,
		\item Suppose that $n\ge 1$. We have $r_{\lambda,\mu}^n=0$ if $\lambda\not\in \block_0$ or $\mu\not\in \block_0$. If $\lambda=D_i$ and $\mu=D_j$ for some $i,j\in [0,p-2]$, let $s\in [1,2p-2]$ such that $n\equiv s(\mod 2p-2)$, then \[r^n_{\lambda,\mu}=\left \{\begin{array}{ll} 1&\text{if $j\in \Rect(s,i)$ and $s\in [1,p-1]$,}\\
		1&\text{if $j\in \Rect(s-p+1,p-2-j)$ and $s\in [p,2p-2]$,}\\
		0&\text{otherwise.}
		\end{array}\right .\]
		\end{enumerate} 
\end{cor}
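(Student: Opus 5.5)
The plan is to use the standard description of the minimal projective bimodule resolution of a self-injective algebra in terms of the one-sided resolutions of its simple modules. For a finite-dimensional algebra $A$ over an algebraically closed field, the multiplicity of $P(\lambda,\mu)=P^\lambda\otimes (P^\mu)^*$ (viewed as a projective $A^e$-module) in the $n$-th term $R_n$ of the minimal projective resolution of $A$ equals $\dim_\F\Ext^n_{A^e}(A, D^\lambda\otimes (D^\mu)^*)$. By the standard identification of Hochschild-type Ext with one-sided Ext, this equals $\dim_\F\Ext^n_A(D^\mu,D^\lambda)$, up to the convention of which factor is the left one. So I would first record the identity
\[
r^n_{\lambda,\mu}=\dim_\F \Ext^n_A(D^\mu,D^\lambda)=\dim_\F\Hom_A\bigl(\Omega^n(D^\mu),D^\lambda\bigr).
\]
The second equality uses minimality of the resolution: for $n\ge1$, every map $\Omega^n(D^\mu)\to D^\lambda$ is nonzero on the head and does not factor through the projective cover. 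In the same way, $\Ext^n_A(D^\mu,D^\lambda)$ is the multiplicity of $D^\lambda$ in the head of $\Omega^n(D^\mu)$.

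With this in hand, case (i) is immediate, since $\Ext^0_A(D^\mu,D^\lambda)=\Hom_A(D^\mu,D^\lambda)$ and the simple modules are pairwise non-isomorphic, so $r^0_{\lambda,\mu}=\delta_{\lambda,\mu}$. For case (ii), if one of $\lambda,\mu$ lies outside $\block_0$, then either $D^\mu$ is projective, so the higher Ext vanishes, or the two simples lie in different blocks, so again the Ext vanishes. This gives $r^n_{\lambda,\mu}=0$.

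For $\lambda,\mu$ both in $\block_0$, say with labels $D_i,D_j$, I would read off the head of the relevant syzygy from Corollary~\ref{C:OmegaiDjStructure}. By periodicity of period $2p-2$, $\Omega^n\cong\Omega^s$ with $s\in[1,2p-2]$. If $s\in[1,p-2]$, the head of $\Omega^s(D_i)$ is $\bigoplus_{t\in\Rect(s,i)}D_t$. This is multiplicity-free, so the multiplicity of $D_j$ is $1$ exactly when $j\in\Rect(s,i)$, and $0$ otherwise. The boundary value $s=p-1$ gives $\Omega^{p-1}(D_i)\cong D_{p-2-i}$, which agrees with $\Rect(p-1,i)$ interpreted via the $j$-diagram; I would check this against the formula. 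For $s\in[p,2p-2]$, part (ii) of the corollary gives the head $\bigoplus_{t\in\Rect(s-p+1,p-2-i)}D_t$, and the case $s=2p-2$ returns $D_i$.

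The main obstacle is bookkeeping rather than mathematics. First, I must get the correct identification $\Ext^n_{A^e}(A,D^\lambda\otimes D^{\mu *})\cong\Ext^n_A(D^\mu,D^\lambda)$, using the self-duality of the simples and the isomorphism $\Hom_\F(D^\mu,D^\lambda)\cong D^\lambda\otimes D^{\mu*}$ as $A^e$-modules. Second, I must match which index ($i$ or $j$) is the syzygy source, so that the stated conditions $j\in\Rect(s,i)$ and $j\in\Rect(s-p+1,p-2-\cdot)$ come out. Here I expect a symmetry $j\in\Rect(s,i)\iff i\in\Rect(s,j)$, which holds because $\Ext^n(D_i,D_j)\cong\Ext^n(D_j,D_i)$ by duality. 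With that symmetry, the choice of ordering is harmless, and the statement (including the apparent self-reference in the second case) follows.
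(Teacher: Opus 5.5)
Your proposal is correct and follows the paper's argument. The paper cites Happel's lemma for the identification $r^n_{\lambda,\mu}=\dim_\F\Ext^n_A(D^\lambda,D^\mu)=\dim_\F\Hom_A(\Omega^n(D^\lambda),D^\mu)$, which you derive from $\Ext^n_{A^e}(A,\Hom_\F(D^\mu,D^\lambda))\cong\Ext^n_A(D^\mu,D^\lambda)$. Like you, it handles the non-principal cases by projectivity and reads off the heads of syzygies from Corollary~\ref{C:OmegaiDjStructure}, and your symmetry $j\in\Rect(s,i)\iff i\in\Rect(s,j)$ makes the order of the indices harmless.

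The boundary check you deferred does not work as printed. $\Rect(p-1,\cdot)$ is not defined in Definition~\ref{D:Rect}, and extending the formulas formally gives $\Dl_{p-1,i}=p-1-i>p-3-i=\Dr_{p-1,i}$, i.e.\ the empty set. So at $s=p-1$ and $s=2p-2$ the printed formula does not return $D_{p-2-i}$ and $D_i$. The printed second case also has the typo $p-2-j$ for $p-2-i$. What your argument, like the paper's one-line proof, actually establishes is the corrected statement:
\begin{itemize}
\item for $s\in[1,p-2]$, the condition is $j\in\Rect(s,i)$;
\item for $s\in[p-1,2p-3]$, the condition is $j\in\Rect(s-p+1,p-2-i)$;
\item for $s=2p-2$, $r^n_{\lambda,\mu}=\delta_{i,j}$.
\end{itemize}
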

\begin{proof} By \cite[\S1.5 Lemma]{happel}, we have 
	\begin{align*}
		r^n_{\lambda,\mu}&=\dim_\F \Ext^n_A(D^\lambda,D^\mu)=\dim_\F\Hom_A(\Omega^n(D^\lambda),D^\mu).
	\end{align*} If any of $D^\lambda$ or $D^\mu$ is projective, then $r^n_{\lambda,\mu}=0$ for $n\ge 1$. The case when $D^\lambda=D_i$ and $D^\mu=D_j$ follows from Corollary \ref{C:OmegaiDjStructure}.  
\end{proof}

\section{Stably Tensor-Semisimple Algebras}

In view of Corollary \ref{C:OmegaiDjStructure}, the stable Green ring of $\F\sym{p}$ is spanned by the syzygies of the simple modules in $\block_0$. In the previous section, we describe the multiplication of such objects. In other words, we have a full description of the ring. In particular, Lemma \ref{L: DiDj semisimple} shows that the tensor product of simple modules remains semisimple modulo projectives. This phenomenon has been observed in certain classes of bialgebras and we shall explore further with examples in this section. 

\begin{defn} A bialgebra $A$ over $\F$ is stably tensor-semisimple if, for any two simple $A$-modules $T,T'$, the tensor product $T\otimes T'$ (via the coalgebra structure of $A$) is semisimple or zero modulo projectives.
\end{defn}


We have the following examples. 

\begin{prop}\label{P:HypoA} The following bialgebras are stably tensor-semisimple.
\begin{enumerate}[(i)]
    \item Any bialgebra that is semisimple (as an algebra).
    \item The group algebra $\F G$ where $G$ is a $p$-group.
    \item The Drinfeld double of a generalized Taft algebra.
    \item Sweedler's 4-dimensional Hopf algebra. 
    \item The symmetric group algebra $\F\sym{p}$.
    \item The pointed rank one Hopf algebras of nilpotent type.
\end{enumerate} 
\end{prop}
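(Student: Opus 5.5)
The plan is to treat the six items separately, since each rests on a different structural feature. Most are short once the right fact is named. Throughout, "semisimple modulo projectives" means that the core $\Omega^0(T\otimes T')$ is semisimple or zero.

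For (i), every module over a semisimple algebra is semisimple, so $T\otimes T'$ is semisimple and there is nothing to prove. For (ii), when $G$ is a $p$-group the only simple $\F G$-module is the trivial module $\F$. Hence $T\otimes T'\cong \F\otimes\F\cong \F$, which is simple. For (v), Lemma~\ref{L: DiDj semisimple} shows that $\Omega^0(D_i\otimes D_j)$ is semisimple whenever both factors lie in $\block_0$. If one factor is $D^\mu$ with $\mu\notin\block_0$, that factor is projective. The tensor product of a projective module with any module over a group algebra is projective, so the core is zero.

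For (iv), Sweedler's algebra $H_4$ has exactly two simple modules, both one-dimensional: the trivial module $\F$ and the sign character $\chi$ (with $g\mapsto -1$, $x\mapsto 0$). A tensor product of one-dimensional modules is one-dimensional and therefore simple. The same argument covers any pointed Hopf algebra whose simples are all one-dimensional. For (vi), I would use the Krop--Radford description of the simple modules of pointed rank one Hopf algebras of nilpotent type. These algebras are built from a group algebra $\F G$ together with a nilpotent skew-primitive $x$ of nilpotency order $n$. If the simple modules in the relevant cases are one-dimensional (characters of $G$ with $x$ acting by $0$), then the argument for (iv) applies verbatim. If simples of dimension greater than one occur, I would fall back on the known decompositions of tensor products into indecomposables for these algebras, which are Nakayama-type with uniserial indecomposables. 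I would then check that every non-projective summand of $T\otimes T'$ is simple.

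For (iii), the generalized Taft algebras $H_{n,d}$ have Drinfeld doubles $D(H_{n,d})$ whose finite-dimensional simple modules and tensor-product decompositions were computed by Chen and collaborators. The simple modules are $V(l,r)$, and $V(l,r)\otimes V(l',r')$ decomposes as a direct sum of simples $V(\cdot,\cdot)$ together with projective modules $P(\cdot,\cdot)$. This would be quoted rather than reproved.

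The main obstacle is (iii), together with the multidimensional case of (vi). These depend on external, fairly intricate decomposition formulas, and the task is to locate the right statements and check that every non-projective summand appearing is simple. I would first try to verify this directly from the published formulas for $V(l,r)\otimes V(l',r')$. If that fails, a cruder route is available: the algebra is a Nakayama algebra, so each indecomposable is uniserial and determined by its top and length. It would then suffice to show that each non-simple, non-projective uniserial candidate in $T\otimes T'$ cannot occur, by comparing composition factors and dimensions.
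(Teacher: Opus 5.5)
\textbf{Gap in (vi).} Rank one pointed Hopf algebras of nilpotent type are built on an arbitrary finite group $G$; only the grouplike $g$ has to be central. So for non-abelian $G$ there are simple modules of dimension greater than one. For these you only promise to ``fall back on known decompositions'' and check the summands, which defers the problem rather than solving it.

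The missing idea is that no decomposition formula is needed. Let $x$ be the nilpotent skew-primitive generator (the paper calls it $y$).
\begin{enumerate}[(a)]
\item The relation $xh=\chi(h)hx$ for $h\in G$ shows that the two-sided ideal generated by $x$ is $xH$. This ideal is nilpotent because $x^n=0$.
\item Hence $x$ annihilates every simple module, whatever its dimension. So the simple $H$-modules are exactly the inflations of simple $KG$-modules along $H\to H/(x)\cong KG$.
\item Since $\Delta(x)=x\otimes g+1\otimes x$, for simple $V,V'$ we get $x(v\otimes v')=xv\otimes gv'+v\otimes xv'=0$.
\item So $V\otimes V'$ is again inflated from $KG$, which is semisimple because the ground field has characteristic zero.
\end{enumerate}
Thus $V\otimes V'$ is genuinely semisimple, not merely modulo projectives. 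This is the paper's proof of (vi). It also covers your (iv), since Sweedler's algebra is the case $G=C_2$, $n=2$.

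\textbf{The other items.} Items (i), (ii) and (v) agree with the paper. For (iv), your observation that both simple modules of Sweedler's algebra are one-dimensional is a valid and more elementary replacement for the paper's citation of Benson's memoir.

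For (iii), the paper, like you, just quotes a published result (Erdmann--Green--Snashall--Taillefer, Proposition~2.8). Your fallback there rests on a false premise, however. The Drinfeld doubles of generalized Taft algebras are of tame representation type, with infinitely many indecomposable modules (string and band modules). So they are not Nakayama algebras, and their indecomposables are not all uniserial. That route should be dropped.
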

\begin{proof} It is clear for parts (i) and (ii). For part (iii), see \cite[Proposition 2.8]{Erdmann/Green/Snashall/Taillefer}. For part (iv), see \cite[\S5.24]{Benson:2024}. For part (v), notice that the simple module $D^\mu$ is projective if $\mu$ does not belong in the principal block $\block_0$ of $\F\sym{p}$. Therefore, $D^\mu\otimes M$ is projective for any $\F\sym{p}$-module $M$ (see, for example, \cite[\S7, Lemma 4]{alp}). This part now follows using Lemma \ref{L: DiDj semisimple}. For part (vi), in \cite[Section 2]{WangLiZhang:2014}, over an algebraically closed field $K$ of characteristic zero, such algebra $H$ satisfies $\pi:H\to H/(y)\cong KG$ for a finite group $G$. Moreover, the simple $H$-modules are obtained from the simple $KG$-modules by inflation along $\pi$. Since $\Delta(y)=y\otimes g+1\otimes y$ (where $g$ is certain element in the center of $G$), for any two simple $H$-modules $V,V'$, $y$ again acts trivially on $V\otimes V'$. Therefore, $V\otimes V'$ factors through $KG$ (which is semisimple) and hence is semisimple.
\end{proof}

Following our discussion above, given a bialgebra $A$ that is stably tensor-semisimple, we can construct a $\Z$-algebra $\Upsilon(A)$ with a formal basis \[\{[T]:\text{$T$ is a simple and non-projective $A$-module}\}\] in the stable Green ring of $A$ and multiplication defined as \[[T]\cdot [T']:=\sum a_{T,T',T''}[T'']\] if $\Omega^0(T\otimes T')\cong \bigoplus a_{T,T',T''}T''$. The algebra $\Upsilon(A)$ has finite rank if $A$ is finite-dimensional. It is commutative if the coalgebra structure of $A$ is cocommutative. It is unital if $A$ has a `trivial module', i.e., one-dimensional $A$-module $\F$ such that $T\otimes \F\cong T\cong \F\otimes T$.  

\bigskip
It would be interesting to investigate the structure of $\Upsilon(A)$ and its relationship with the stable Green ring of $A$. Motivated by the tensor-product formula for $\F\sym{p}$ in Theorem \ref{T:StableGreenRing}, we compute two examples below that may shed some light on this question.

\begin{eg} We consider $\Upsilon(\F\sym{3})$ with $p=3$. It is rank two with a $\Z$-basis $\{[D_0],[D_1]\}$, and is isomorphic to $\Z C_2\cong \Z[X]/(X^2-1)$.  
\end{eg}

\begin{eg} We consider $B:=\Upsilon(\F\sym{5})$ with $p=5$. It is rank 4 with a basis $\{[D_i]:i=0,1,2,3\}$. We use the labelling $1=[D_0]$, $x=[D_1]$, $y=[D_2]$ and $z=[D_3]$. According to Lemma \ref{L: DiDj semisimple}, the multiplication table for the basis elements is given as follows:
\begin{center}
\begin{tblr}{
  colspec = {ccccc},
  cells   = {mode=math},
  hlines,
  vlines,
  hline{2} = {1.2pt},
  vline{2} = {1.2pt},
}
\cdot & 1 & x   & y   & z \\
1     & 1 & x   & y   & z \\
x     & x & 1+y & x+z & y \\
y     & y & x+z & 1+y & x \\
z     & z & y   & x   & 1
\end{tblr}
\end{center}

Let $\K$ be a field of characteristic $q$. To avoid technical difficulty, we may assume that $\K$ is algebraically closed. However, the reader will find the sufficient conditions for the field $\K$ in the calculation below. We claim that $B_\K:=\K\otimes_\Z B$ is semisimple if and only if $q\ne 2,5$. 

Suppose that $q=2$. We have a decomposition \[B_\K=eB_\K\oplus e'B_\K\] where $e=\lambda+y$ and $e'=\mu+y$ where $\lambda,\mu$ are distinct roots of the irreducible polynomial $X^2+X+1=0$. The $B_\K$-module $eB_\K$ (respectively, $e'B_\K$) is two-dimensional indecomposable with $T:=\Span\{e+ez\}$ (respectively, $T':=\Span\{e'+e'z\}$) the unique simple submodule. Furthermore, $eB_\K/T\cong T\not\cong T'\cong e'B_\K/T'$ and $eB_\K\cong KC_2\cong e'B_\K$. In particular, $\Rad(B_\K)=\Span\{e+ez,e'+e'z\}$. The Ext-quiver of $B_\K$ consists of two vertices with one loop for each vertex. 

Suppose that $q=5$. We have a decomposition \[B_\K=eB_\K\oplus e'B_\K\] where $e=\frac{1}{2}(1+z)$ and $e'=1-e$. The $B_\K$-module $eB_\K$ (respectively, $e'B_\K$) is two-dimensional indecomposable with $T:=\Span\{e+3ex\}$ (respectively, $T':=\Span\{e'+2e'x\}$) the unique simple submodule. Furthermore, $eB_\K/T\cong T\not\cong T'\cong e'B_\K/T'$. In particular, $\Rad(B_\K)=\Span\{e+3ex,e'+2e'x\}$. Similarly, the Ext-quiver of $B_\K$ consists of two vertices with one loop for each vertex. 

Suppose that $q\ne 2,5$. Again, we have a decomposition \[B_\K=eB_\K\oplus e'B_\K\] where $e=\frac{1}{2}(1+z)$ and $e'=1-e$. Let $\lambda,\mu$ be distinct roots of the polynomial $X^2-X+\frac{1}{5}=0$. Notice that $eB_\K$ has a basis $\{e,ex\}$ where $(ex)^2=e+ex$. Furthermore, the algebra $eB_\K$ decomposes as $\Span\{\lambda e+(1-2\lambda) ex\}\oplus \Span\{\mu e+(1-2\mu) ex\}$. On the other hand, let $e'=1-e$, the algebra $e'B_\K$ decomposes as $\Span\{\lambda e'+(2\lambda-1) e'x\}\oplus \Span\{\mu e'+(2\mu-1) e'x\}$. Thus, $B_\K$ is semisimple. 

In any case of $q$, we observe that $B_\K$ is basic. It remains mysterious to the authors the underlying reasons how the algebra $B_\K$ manages to single out the prime $q=5=p$ (except $q=2$) for non-semisimplicity. 
\end{eg}

\section{Benson--Symonds invariants for $\F\sym{p}$}\label{S:BensonSymonds}

The representation theory for $\F C_p$ is well-known. Its complete set of indecomposables consists of the Jordan blocks $J_i$'s of size $i$, one for each $i\in [1,p]$.  We let $E=\langle (1,2,\ldots,p)\rangle\subseteq \sym{p}$. 

In this section, we compute the Benson--Symonds invariant for all the indecomposable non-projective $\F\sym{p}$-modules. To prove our result, we make use of the following theorem. The definition of core-bounded species can be found in \cite[Definition 9.12]{bensonsymonds}.

\begin{thm}[{\cite[Theorem 10.1]{bensonsymonds}}]\label{T:BS10.1} Let $k\in [1,p-1]$. We have $\gamma_{C_p}(J_k) = \dfrac{\sin(\frac{k\pi}{p})}{\sin(\frac{\pi}{p})}$ and $\gamma_{C_p}$ agrees on module classes with a certain core-bounded species of the Green ring of $\F C_p$ and hence is in particular additive and multiplicative on $\F C_p$-modules. 
\end{thm}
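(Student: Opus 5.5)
Since this result is quoted from \cite[Theorem 10.1]{bensonsymonds}, we only sketch how we would recover it directly for $C_p$, using the explicit structure of the Green ring of $\F C_p$. The plan is to write down an explicit ring homomorphism $\phi$ from the stable Green ring of $\F C_p$ to $\R$, and then show that it controls core dimensions up to a bounded multiplicative constant.

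\emph{Step 1: the homomorphism $\phi$.} We use the classical tensor formula (Srinivasan \cite{Srinivasan:1964}, or a direct Jordan-block computation): $J_2\otimes J_k\cong J_{k-1}\oplus J_{k+1}$ for $k\in[1,p-1]$, with $J_0=0$ and $J_p$ projective. Hence modulo projectives every $J_k$ is a polynomial in $J_2$, namely a Chebyshev polynomial of the second kind, and the stable Green ring is commutative and generated by $J_2$. We define $\phi(J_k)=\sin(k\pi/p)/\sin(\pi/p)$, extended additively, and set $\phi(J_p)=0$. The trigonometric identity
\[2\cos(\pi/p)\,\sin(k\pi/p)=\sin((k-1)\pi/p)+\sin((k+1)\pi/p)\]
shows that $\phi$ respects the recursion defining multiplication by $J_2$. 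Because $J_2$ generates, $\phi$ is a ring homomorphism on the Green ring that vanishes on projectives (since $\sin\pi=0$). It is therefore additive and multiplicative on modules, and $\phi(M)=\phi(\Omega^0(M))$.

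\emph{Step 2: two-sided bounds on core dimension.} For $k\in[1,p-1]$ we check $1\le \sin(k\pi/p)/\sin(\pi/p)\le k$. The lower bound follows from concavity of $\sin$ on $[0,\pi]$ and the symmetry $k\leftrightarrow p-k$. The upper bound is the standard inequality $|\sin(kx)|\le k|\sin x|$. Consequently, for any non-projective module $N$ with no projective summands,
\[\phi(N)\le \dim N\le (p-1)\cdot\#\{\text{summands of }N\}\le (p-1)\,\phi(N).\]
This is precisely the ``core-bounded'' property.

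\emph{Step 3: conclusion.} We apply Step 2 to $N=\Omega^0(M^{\otimes n})$ and use multiplicativity, $\phi(N)=\phi(M)^n$. This gives
\[\phi(M)^n\le \coredim_n^{C_p}(M)\le (p-1)\phi(M)^n.\]
Taking $n$-th roots and letting $n\to\infty$ yields $\gamma_{C_p}(M)=\phi(M)$. In particular $\gamma_{C_p}(J_k)=\sin(k\pi/p)/\sin(\pi/p)$, and $\gamma_{C_p}$ is additive and multiplicative because $\phi$ is. The main point needing care is Step 1: verifying that the tensor formula for $J_2\otimes J_k$ holds including the boundary case $k=p-1$, where $J_2\otimes J_{p-1}\cong J_{p-2}\oplus J_p$. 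The remaining steps are elementary inequalities.
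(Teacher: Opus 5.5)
Your argument is correct, but it follows a different route from the paper, because the paper gives no proof of its own. It imports the statement from \cite[Theorem 10.1]{bensonsymonds}, where it sits inside Benson--Symonds' general framework of species of the Green ring and core-bounded species (their Definition 9.12).

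Your derivation is self-contained and runs as follows:
\begin{enumerate}[(1)]
\item The Srinivasan formula $J_2\otimes J_k\cong J_{k-1}\oplus J_{k+1}$ makes $a(\F C_p)$ a quotient of $\Z[X]$ via $X\mapsto J_2$.
\item The identity $2\cos(\pi/p)\sin(k\pi/p)=\sin((k-1)\pi/p)+\sin((k+1)\pi/p)$ gives $\phi(J_2x)=\phi(J_2)\phi(x)$ on the basis. You should also record the trivial case $J_2\otimes J_p\cong J_p\oplus J_p$ for the basis element $J_p$.
\item Induction on powers of $J_2$ then shows that $\phi$ is a ring homomorphism killing projectives.
\item The elementary bounds $1\le\phi(J_k)\le k$ for $k\in[1,p-1]$ give $\phi(M)^n\le\coredim_n^{C_p}(M)\le(p-1)\phi(M)^n$.
\end{enumerate}

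Your route buys transparency and a little more than the statement. It gives an explicit constant, and it shows that $\sqrt[n]{\coredim_n^{C_p}(M)}$ actually converges to $\phi(M)$, not merely that its $\limsup$ equals $\phi(M)$.

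The citation buys brevity and the precise framework. Your sentence that the two-sided bound is ``precisely the core-bounded property'' is an assertion about a definition you have not checked. What you have actually proved is $0\le\phi(M)\le\dim_\F\Omega^0(M)$ for every module $M$, i.e.\ boundedness of $\phi$ by core dimension. That is what one expects the definition to require, but it is not verified. In any case, the parts of the statement used later in the paper follow directly from your Step 3, whatever the exact terminology. These are the values $\gamma_{C_p}(J_k)$ and the additivity and multiplicativity of $\gamma_{C_p}$, both needed in Section~\ref{S:BensonSymonds}.
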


\begin{lem}\label{lem:  wangslemma} \cite[Lemma 4.2.8]{wang} Let $j\in [0,p-2]$. Then 
\[\Omega^0(\Res_{E} (D_j)) \cong  \left \{
\begin{array}{ll}
J_{j+1}& \text{if $j$ is even,} \\
J_{p-j-1}&\text{if $j$ is odd.}
\end{array}\right .\]  
\end{lem}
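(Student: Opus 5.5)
The plan is to pin down the core $\Omega^0(\Res_E(D_j))$ in two steps. First show it is a single Jordan block $J_m$ with $m\in[1,p-1]$. Then identify $m$ from $\dim_\F D_j$ modulo $p$. For the second step I use that a projective $\F E$-module has dimension divisible by $p$, so $\dim_\F \Omega^0(\Res_E(M))\equiv \dim_\F M \pmod p$ for every $\F\sym{p}$-module $M$.

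\emph{Dimension count.} By the proof of Lemma \ref{lem: resSimple} we have $D_j\cong\bigwedge^j D_1$. Since $\dim_\F D_1=p-2$, this gives $\dim_\F D_j=\binom{p-2}{j}$. Working modulo $p$,
\[\binom{p-2}{j}=\prod_{s=1}^{j}\frac{p-1-s}{s}\equiv\prod_{s=1}^{j}\frac{-(s+1)}{s}=(-1)^j(j+1)\pmod p.\]
Suppose the core is $J_m$ with $m\in[1,p-1]$. If $j$ is even, then $m\equiv j+1$, and since $j+1\in[1,p-1]$ we get $m=j+1$. If $j$ is odd, then $m\equiv -(j+1)\equiv p-j-1$, and since $p-j-1\in[1,p-1]$ we get $m=p-j-1$. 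This is exactly the claimed formula. The core is nonzero because $p\nmid\binom{p-2}{j}$.

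\emph{Indecomposability of the core.} This is the step I expect to be the main obstacle. The first approach I would try is Green correspondence with respect to $N=N_{\sym{p}}(E)\cong C_p\rtimes C_{p-1}$, which is valid because $E$ is a trivial-intersection Sylow subgroup. Since $D_j$ is indecomposable and non-projective, we get $\Res_N(D_j)\cong f(D_j)\oplus(\text{projective})$ with $f(D_j)$ indecomposable and non-projective. Because $p\nmid|N:E|$, Clifford theory shows that the indecomposable $\F N$-modules are uniserial, and their restrictions to the normal subgroup $E$ are single Jordan blocks: $E$ acts on each by a single power of the generator's augmentation. Projective $\F N$-modules restrict to projective $\F E$-modules. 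Hence $\Omega^0(\Res_E(D_j))=\Res_E(f(D_j))$ is indecomposable.

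If I want to avoid quoting the structure of $\F N$-modules, I have a backup check for $j=0,1$. Here $\Res_E(D_0)=J_1$. Also $S_1$ is the augmentation submodule of the regular module $\Res_E M^{(p-1,1)}\cong J_p$, so $\Res_E S_1\cong J_{p-1}$, and $D_1=S_1/\F$ gives $\Res_E D_1\cong J_{p-2}$. Both agree with the statement. I could then combine this with Lemma \ref{lem: limtan}, which shows that $\Res_E S_k$ has core $\Omega^k(J_1)$, that is $J_1$ or $J_{p-1}$, together with the sequences $0\to D_{i+1}\to S_i\to D_i\to 0$, and argue inductively in the stable category of $\F E$. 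The Green correspondence argument is cleaner, though, and is the one I would write up.
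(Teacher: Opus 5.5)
Your proof is correct, but it takes a different route from the paper.

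The paper computes the restriction directly. It imports $\Res_E(D_1)\cong J_{p-2}$ from Lim--Wang, writes $\Res_E(D_j)\cong\bigwedge^j J_{p-2}$, and decomposes this exterior power using the Gaussian-polynomial formula for $\F C_p$-modules.

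You use $D_j\cong\bigwedge^j D_1$ only to get $\dim_\F D_j=\binom{p-2}{j}$. In place of a computation inside $\F E$, you use a structural fact:
\begin{itemize}
\item $E$ is a trivial-intersection Sylow subgroup, so Green correspondence with $N=N_{\sym{p}}(E)\cong C_p\rtimes C_{p-1}$ has only projective error terms;
\item every indecomposable $\F N$-module restricts to a single Jordan block, so the core is some $J_m$ with $m\in[1,p-1]$;
\item the congruence $\binom{p-2}{j}\equiv(-1)^j(j+1)\pmod p$ then determines $m$.
\end{itemize}

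Your phrase about ``a single power of the generator's augmentation'' should be replaced by the actual reason. For a generator $g$ of $E$ one has $\Rad(\F N)M=(g-1)M$. The simple $\F N$-modules are one-dimensional because $N/E\cong C_{p-1}$ is abelian. Hence an indecomposable (uniserial) $\F N$-module $M$ has $\dim_\F M/(g-1)M=1$, so it is cyclic over $\F E$.

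What each approach buys:
\begin{itemize}
\item Your route needs fewer inputs and still gives the whole restriction, $\Res_E(D_j)\cong J_m\oplus J_p^{\oplus(\binom{p-2}{j}-m)/p}$. It also applies unchanged to every indecomposable non-projective $\F\sym{p}$-module.
\item The paper's computation never uses that $E$ is a Sylow subgroup with abelian $N/E$. This fits the paper's remark that Wang's lemma applies beyond $\sym{p}$. In that wider setting the core of a restriction to $E$ is in general no longer a single Jordan block, so your argument would not carry over.
\end{itemize}

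Your backup plan is not needed. If you keep it, two corrections are needed. The sequences should read $0\to D_{i-1}\to S_i\to D_i\to 0$, as your own $D_1=S_1/\F$ shows. Also, exact triangles in the stable category would not by themselves force the core to be indecomposable.
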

\begin{proof} The reference we have cited here applies not just for $\sym{p}$. In our special case, the situation is much simpler. The restriction of $D_1$ to $E$ is isomorphic to $J_{p-2}$ (see \cite[Theorem 4.5]{limwang}). Then $\Res_E(D_j)\cong \bigwedge^j(\Res_E(D_1))\cong \bigwedge^j(J_{p-2})$ can be calculated using the Gaussian polynomial (see \cite[Theorem 2.8.2]{bensonpgroups}) as a module for the cyclic group of order $p$.
\end{proof}

\begin{lem}\label{L:gammaAM} Let $G$ be a finite group with a cyclic Sylow $p$-subgroup of order $p$. Then $\gamma_G$ is additive and multiplicative on $\F G$-modules, that is, $\gamma_G:\C\otimes_\Z a(\F G)\to \C$ is a $\C$-algebra homomorphism. In particular, \cite[Conjecture 1.4]{bensonsymonds} holds for $\F G$, i.e., for any $\F G$-module, we have $\gamma_G(M\otimes M^*)=\gamma_G(M)^2$. 
\end{lem}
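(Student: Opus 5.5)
The plan is to reduce to the Sylow subgroup and use Theorem \ref{T:BS10.1}. Let $P$ be a Sylow $p$-subgroup of $G$, which is cyclic of order $p$. Up to conjugacy it is the unique nontrivial elementary abelian $p$-subgroup (any elementary abelian $p$-subgroup has order at most $p$ and is conjugate into $P$). The trivial subgroup contributes $\gamma_1(M)=0$, since every module over the trivial group is projective. So Theorem \ref{T:BensonSymonds}(iii) gives
\[\gamma_G(M)=\gamma_P(\Res_P M)\]
for every $\F G$-module $M$ (when $P$ is trivial, $\F G$ is semisimple and $\gamma_G\equiv 0$, which is trivially a homomorphism on the span of nonzero classes; I would treat that case separately or assume $p\mid |G|$).

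Next I show that $\gamma_G$ factors as a composite of ring maps. Restriction $\Res_P:a(\F G)\to a(\F P)$ is additive, since it commutes with direct sums. It is multiplicative because $\Res_P(M\otimes N)\cong \Res_P M\otimes \Res_P N$ (diagonal action). Extending scalars gives a $\C$-algebra homomorphism $\C\otimes a(\F G)\to \C\otimes a(\F P)$. By Theorem \ref{T:BS10.1}, $\gamma_{C_p}$ agrees on module classes with a core-bounded species, which is a $\C$-algebra homomorphism $\C\otimes a(\F C_p)\to\C$. Composing the two, $\gamma_G=\gamma_P\circ\Res_P$ agrees on module classes with a $\C$-algebra homomorphism.

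The one point needing care, and the main obstacle, is that $\gamma_G$ is a priori defined only on modules, not on virtual classes. One must check that the statement ``$\gamma_G$ is additive and multiplicative'' means
\[\gamma_G(M\oplus N)=\gamma_G(M)+\gamma_G(N),\qquad \gamma_G(M\otimes N)=\gamma_G(M)\gamma_G(N).\]
These follow at once from the corresponding identities for $\gamma_P$ on the restrictions. The homomorphism on $\C\otimes a(\F G)$ is then the $\C$-linear extension, which is well defined because the isomorphism classes of indecomposables form a basis. Finally, for Conjecture 1.4, restriction commutes with duals, so
\[\gamma_G(M\otimes M^*)=\gamma_G(M)\,\gamma_G(M^*)=\gamma_G(M)^2\]
by multiplicativity and Theorem \ref{T:BensonSymonds}(iv).
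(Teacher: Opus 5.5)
Your proposal is correct and follows essentially the same route as the paper: you identify $\gamma_G$ with $\gamma_P\circ\Res_P$ via Theorem \ref{T:BensonSymonds}(iii), transfer additivity and multiplicativity from Theorem \ref{T:BS10.1}, and obtain the conjecture from multiplicativity together with Theorem \ref{T:BensonSymonds}(iv). Your remark that the trivial subgroup contributes $0$ to the maximum is a small point the paper's proof leaves implicit.
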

\begin{proof} Let $E$ be a Sylow $p$-subgroup of $G$. Since it is the unique elementary abelian $p$-subgroup of $G$ up to conjugation, by Theorem \ref{T:BensonSymonds}(iii), we have $\gamma_G(M)=\gamma_E(M)$ for any $\F G$-module $M$. Therefore, for another $\F G$-module $N$, using Theorem \ref{T:BS10.1}, we have  \[\gamma_{G}(M\oplus N)=\gamma_E (M\oplus N)=\gamma_E(M)+\gamma_E(N)=\gamma_{G}(M)+\gamma_{G}(N).\] A similar argument shows that $\gamma_{G}(M\otimes N)=\gamma_{G}(M)\gamma_{G}(N)$. Clearly, $\gamma_G(F)=1$. For the final statement, using Theorem \ref{T:BensonSymonds}(iv), we have \[\gamma_G(M\otimes M^*)=\gamma_G(M)\gamma_G(M^*)=\gamma_G(M)^2.\]
\end{proof}

We have the following main result of this section.

\begin{thm}\label{T: gammaDi} Let $i \in \Z$. Then, for any $j\in [0,p-2]$, we have \[\gamma_{\sym{p}} (\Omega^i(D_j)) = \dfrac{\sin(\frac{(j+1)\pi}{p})}{\sin(\frac{\pi}{p})}.\] Furthermore, 
\begin{enumerate}[(i)]
    \item $\gamma_{\sym{p}}$ is additive and multiplicative on modules and so the above determines $\gamma_{\sym{p}}$ on any $\F\sym{p}$-module, and
    \item $\gamma_{\sym{p}}(M\otimes M^*)=\gamma_{\sym{p}}(M)^2$ for any $\F\sym{p}$-module $M$.
\end{enumerate}
\end{thm}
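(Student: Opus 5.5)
The plan is to first reduce the invariant of $\Omega^i(D_j)$ to that of $D_j$ itself, and then to compute the latter on the Sylow subgroup $E$. Since $\Omega^{-1}(M)\cong (\Omega(M^*))^*$, Theorem \ref{T:BensonSymonds}(ii) and (iv) give
\[\gamma_{\sym{p}}(\Omega^{-1}(M))=\gamma_{\sym{p}}(\Omega(M^*))=\gamma_{\sym{p}}(M^*)=\gamma_{\sym{p}}(M).\]
Together with Theorem \ref{T:BensonSymonds}(ii) itself, this shows that $\gamma_{\sym{p}}(\Omega^i(M))=\gamma_{\sym{p}}(M)$ for every $i\in\Z$. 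Hence it suffices to prove that $\gamma_{\sym{p}}(D_j)=\sin(\frac{(j+1)\pi}{p})/\sin(\frac{\pi}{p})$.

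Next, the Sylow $p$-subgroup $E=\langle(1,2,\ldots,p)\rangle$ is cyclic of order $p$ and is the only elementary abelian $p$-subgroup of $\sym{p}$ up to conjugacy. So Theorem \ref{T:BensonSymonds}(iii) gives $\gamma_{\sym{p}}(D_j)=\gamma_E(\Res_E D_j)$. By Theorem \ref{T:BensonSymonds}(i), $\gamma_E$ depends only on the core, and Lemma \ref{lem:  wangslemma} identifies that core as $J_{j+1}$ if $j$ is even and $J_{p-j-1}$ if $j$ is odd. Theorem \ref{T:BS10.1} then yields $\sin(\frac{(j+1)\pi}{p})/\sin(\frac{\pi}{p})$ in the even case. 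In the odd case it yields $\sin(\frac{(p-j-1)\pi}{p})/\sin(\frac{\pi}{p})$, which is the same value because $\sin(\pi-x)=\sin x$. The Jordan block index lies in $[1,p-1]$ in both cases, since $j\in[0,p-2]$, so Theorem \ref{T:BS10.1} applies.

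For (i) and (ii), apply Lemma \ref{L:gammaAM} with $G=\sym{p}$, which has a cyclic Sylow $p$-subgroup of order $p$; this gives additivity, multiplicativity and $\gamma(M\otimes M^*)=\gamma(M)^2$ directly. To see that the displayed formula determines $\gamma_{\sym{p}}$ on every module, decompose $M$ into indecomposables using Corollary \ref{C:OmegaiDjStructure}. Projective modules, including the $D^\mu$ outside $\block_0$, have $\gamma=0$ because their tensor powers are projective. The remaining summands are the syzygies $\Omega^i(D_j)$, whose values were computed above, and additivity assembles them.

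The only slightly delicate point is the negative syzygies. The duality argument handles them, and alternatively one can use the periodicity $2p-2$ from Corollary \ref{C:OmegaiDjStructure} to replace any $i$ by a non-negative representative. Everything else is a direct assembly of the cited results.
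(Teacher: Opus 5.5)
Your proposal is correct and takes essentially the same approach as the paper. You reduce to $D_j$, compute $\gamma_E$ of the core of $\operatorname{Res}_E D_j$ via Wang's lemma and Theorem \ref{T:BS10.1}, and deduce (i) and (ii) from Lemma \ref{L:gammaAM} together with the classification in Corollary \ref{C:OmegaiDjStructure}; the only cosmetic difference is that you handle negative $i$ via $\Omega^{-1}(M)\cong(\Omega(M^*))^*$ and Theorem \ref{T:BensonSymonds}(ii),(iv), whereas the paper uses the $(2p-2)$-periodicity of $D_j$ with Theorem \ref{T:BensonSymonds}(ii).
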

\begin{proof} We first consider the case when $i=0$. Clearly, $\gamma_{\sym{p}}(D_0)=1$. For $j\in [1,p-2]$, we have 
\begin{equation}\label{Eq:gamma}
\gamma_{\frak{S}_p} (D_j) = \gamma_{E}(\Res_{E}D_j)=\left \{
\begin{array}{ll}
\gamma_{E}(J_{j+1})& \text{if $j$ is even,} \\
\gamma_{E}(J_{p-j-1})&\text{if $j$ is odd,}
\end{array}\right .
\end{equation} where the first equality follows from Theorem \ref{T:BensonSymonds}(iii) because $E$ is the only elementary abelian $p$-subgroup of $\sym{p}$ up to conjugation and the second equality follows from Theorem \ref{T:BensonSymonds}(i). We now apply Theorem \ref{T:BS10.1}. For any $i\in\Z$, since $D_j$ is periodic (of period $2p-2$) by Corollary \ref{C:OmegaiDjStructure}, our result now follows from Equation \ref{Eq:gamma} and Theorem \ref{T:BensonSymonds}(ii). Parts (i) and (ii) of our statement now follow from Lemma \ref{L:gammaAM} and Corollary \ref{C:OmegaiDjStructure}.  
\end{proof}

\bibliographystyle{amsplain}
\bibliography{bib}

@article {WangLiZhang:2014,
    AUTHOR = {Wang, Z.H. and Li, L.B. and Zhang, Y.H.},
     TITLE = {Green rings of pointed rank one {H}opf algebras of nilpotent
              type},
   JOURNAL = {Algebr. Represent. Theory},
  FJOURNAL = {Algebras and Representation Theory},
    VOLUME = {17},
      YEAR = {2014},
    NUMBER = {6},
     PAGES = {1901--1924},
      ISSN = {1386-923X,1572-9079},
   MRCLASS = {16T05 (19A22)},
MRREVIEWER = {Andrei\ Marcus},
       DOI = {10.1007/s10468-014-9484-9},
       URL = {https://doi.org/10.1007/s10468-014-9484-9},
}

@article {Mullineux:1979,
    AUTHOR = {Mullineux, G.},
     TITLE = {Bijections of {$p$}-regular partitions and {$p$}-modular
              irreducibles of the symmetric groups},
   JOURNAL = {J. London Math. Soc. (2)},
  FJOURNAL = {Journal of the London Mathematical Society. Second Series},
    VOLUME = {20},
      YEAR = {1979},
    NUMBER = {1},
     PAGES = {60--66},
      ISSN = {0024-6107,1469-7750},
   MRCLASS = {20C30},
MRREVIEWER = {H.\ K.\ Farahat},
       DOI = {10.1112/jlms/s2-20.1.60},
       URL = {https://doi-org.remotexs.ntu.edu.sg/10.1112/jlms/s2-20.1.60},
}

@article {Ford/Kleshchev:1997,
    AUTHOR = {Ford, B. and Kleshchev, A. S.},
     TITLE = {A proof of the {M}ullineux conjecture},
   JOURNAL = {Math. Z.},
  FJOURNAL = {Mathematische Zeitschrift},
    VOLUME = {226},
      YEAR = {1997},
    NUMBER = {2},
     PAGES = {267--308},
      ISSN = {0025-5874,1432-1823},
   MRCLASS = {20C30 (05E10)},
MRREVIEWER = {G.\ D.\ James},
       DOI = {10.1007/PL00004340},
       URL = {https://doi-org.remotexs.ntu.edu.sg/10.1007/PL00004340},
}

@article {Dade:1966,
    AUTHOR = {Dade, E. C.},
     TITLE = {Blocks with cyclic defect groups},
   JOURNAL = {Ann. of Math. (2)},
  FJOURNAL = {Annals of Mathematics. Second Series},
    VOLUME = {84},
      YEAR = {1966},
     PAGES = {20--48},
      ISSN = {0003-486X},
   MRCLASS = {20.80},
MRREVIEWER = {C.\ W.\ Curtis},
       DOI = {10.2307/1970529},
       URL = {https://doi-org.remotexs.ntu.edu.sg/10.2307/1970529},
}

@article {Higman:1954,
    AUTHOR = {Higman, D. G.},
     TITLE = {Indecomposable representations at characteristic {$p$}},
   JOURNAL = {Duke Math. J.},
  FJOURNAL = {Duke Mathematical Journal},
    VOLUME = {21},
      YEAR = {1954},
     PAGES = {377--381},
      ISSN = {0012-7094,1547-7398},
   MRCLASS = {20.0X},
MRREVIEWER = {B.\ Eckmann},
       URL = {http://projecteuclid.org/euclid.dmj/1077465741},
}

@article {Srinivasan:1964,
    AUTHOR = {Srinivasan, B.},
     TITLE = {The modular representation ring of a cyclic {$p$}-group},
   JOURNAL = {Proc. London Math. Soc. (3)},
  FJOURNAL = {Proceedings of the London Mathematical Society. Third Series},
    VOLUME = {14},
      YEAR = {1964},
     PAGES = {677--688},
      ISSN = {0024-6115,1460-244X},
   MRCLASS = {20.80},
MRREVIEWER = {C.\ W.\ Curtis},
       DOI = {10.1112/plms/s3-14.4.677},
       URL = {https://doi-org.remotexs.ntu.edu.sg/10.1112/plms/s3-14.4.677},
}

@article {Lindsey:1974,
    AUTHOR = {Lindsey, II, J. H.},
     TITLE = {Groups with a t. i. cyclic {S}ylow subgroup},
   JOURNAL = {J. Algebra},
  FJOURNAL = {Journal of Algebra},
    VOLUME = {30},
      YEAR = {1974},
     PAGES = {181--235},
      ISSN = {0021-8693},
   MRCLASS = {20C20},
MRREVIEWER = {Bhama\ Srinivasan},
       DOI = {10.1016/0021-8693(74)90199-9},
       URL = {https://doi-org.remotexs.ntu.edu.sg/10.1016/0021-8693(74)90199-9},
}

@article {Feit:1966,
    AUTHOR = {Feit, W.},
     TITLE = {Groups with a cyclic {S}ylow subgroup},
   JOURNAL = {Nagoya Math. J.},
  FJOURNAL = {Nagoya Mathematical Journal},
    VOLUME = {27},
      YEAR = {1966},
     PAGES = {571--584},
      ISSN = {0027-7630,2152-6842},
   MRCLASS = {20.27},
MRREVIEWER = {Zvonimir\ Janko},
       URL = {http://projecteuclid.org/euclid.nmj/1118801774},
}

@article {Benson:2024,
    AUTHOR = {Benson, D. J.},
     TITLE = {Modular representation theory and commutative {B}anach
              algebras},
   JOURNAL = {Mem. Amer. Math. Soc.},
  FJOURNAL = {Memoirs of the American Mathematical Society},
    VOLUME = {298},
      YEAR = {2024},
    NUMBER = {1488},
     PAGES = {v+118},
      ISSN = {0065-9266,1947-6221},
      ISBN = {978-1-4704-7029-6; 978-1-4704-7854-4},
   MRCLASS = {20C20 (16T05 46J99)},
MRREVIEWER = {Aleksandr\ Panov},
       DOI = {10.1090/memo/1488},
       URL = {https://doi-org.remotexs.ntu.edu.sg/10.1090/memo/1488},
}

@article {Alperin/Janusz:1973,
    AUTHOR = {Alperin, J. L. and Janusz, G.},
     TITLE = {Resolutions and periodicity},
   JOURNAL = {Proc. Amer. Math. Soc.},
  FJOURNAL = {Proceedings of the American Mathematical Society},
    VOLUME = {37},
      YEAR = {1973},
     PAGES = {403--406},
      ISSN = {0002-9939,1088-6826},
   MRCLASS = {20J05},
MRREVIEWER = {Edward\ Cline},
       DOI = {10.2307/2039448},
       URL = {https://doi-org.remotexs.ntu.edu.sg/10.2307/2039448},
}

@article {Janusz:1966,
    AUTHOR = {Janusz, G. J.},
     TITLE = {Indecomposable representations of groups with a cyclic {S}ylow
              subgroup},
   JOURNAL = {Trans. Amer. Math. Soc.},
  FJOURNAL = {Transactions of the American Mathematical Society},
    VOLUME = {125},
      YEAR = {1966},
     PAGES = {288--295},
      ISSN = {0002-9947,1088-6850},
   MRCLASS = {20.80},
MRREVIEWER = {J.\ E.\ Adney},
       DOI = {10.2307/1994355},
       URL = {https://doi-org.remotexs.ntu.edu.sg/10.2307/1994355},
}

@article {Erdmann/Green/Snashall/Taillefer,
    AUTHOR = {Erdmann, K. and Green, E. L. and Snashall, N. and
              Taillefer, R.},
     TITLE = {Stable green ring of the {D}rinfeld doubles of the generalised
              {T}aft algebras (corrections and new results)},
   JOURNAL = {Algebr. Represent. Theory},
  FJOURNAL = {Algebras and Representation Theory},
    VOLUME = {22},
      YEAR = {2019},
    NUMBER = {4},
     PAGES = {757--783},
      ISSN = {1386-923X,1572-9079},
   MRCLASS = {16E30 (06B15 16G20 16G70 16T05 17B37 81R50)},
MRREVIEWER = {Eliezer\ Batista},
       DOI = {10.1007/s10468-018-9797-1},
       URL = {https://doi-org.remotexs.ntu.edu.sg/10.1007/s10468-018-9797-1},
}

@article {limwang,
    AUTHOR = {Lim, K. J. and Wang, J.},
     TITLE = {Small modules with interesting rank varieties},
   JOURNAL = {J. Algebra},
  FJOURNAL = {Journal of Algebra},
    VOLUME = {630},
      YEAR = {2023},
     PAGES = {198--224},
      ISSN = {0021-8693,1090-266X},
   MRCLASS = {20C20 (14J25 20C30)},
MRREVIEWER = {Katsuhiro\ Uno},
       DOI = {10.1016/j.jalgebra.2023.03.037},
       URL = {https://doi-org.remotexs.ntu.edu.sg/10.1016/j.jalgebra.2023.03.037},
}

@article {kuelshammer,
    AUTHOR = {K\"ulshammer, B.},
     TITLE = {Some indecomposable modules and their vertices},
   JOURNAL = {J. Pure Appl. Algebra},
  FJOURNAL = {Journal of Pure and Applied Algebra},
    VOLUME = {86},
      YEAR = {1993},
    NUMBER = {1},
     PAGES = {65--73},
      ISSN = {0022-4049,1873-1376},
   MRCLASS = {20C05},
MRREVIEWER = {A.\ S.\ Kondrat\cprime ev},
       DOI = {10.1016/0022-4049(93)90153-K},
       URL = {https://doi-org.remotexs.ntu.edu.sg/10.1016/0022-4049(93)90153-K},
}

@article {Brauer:1947,
	AUTHOR = {R. Brauer},
	TITLE = {On a conjecture by {N}akayama},
	JOURNAL = {Trans. Roy. Soc. Canada Sect. III},
	FJOURNAL = {Transactions of the Royal Society of Canada. Section III},
	VOLUME = {41},
	YEAR = {1947},
	PAGES = {11--19},
	ISSN = {0035-9122},
	MRCLASS = {20.0X},
	MRREVIEWER = {R.\ M.\ Thrall},
}

@article {robinsonproof,
	AUTHOR = {Robinson, G. de B.},
	TITLE = {On a conjecture by {N}akayama},
	JOURNAL = {Trans. Roy. Soc. Canada Sect. III},
	FJOURNAL = {Transactions of the Royal Society of Canada. Section III},
	VOLUME = {41},
	YEAR = {1947},
	PAGES = {20--25},
	ISSN = {0035-9122},
	MRCLASS = {20.0X},
	MRREVIEWER = {R.\ M.\ Thrall},
}

@book {alp,
    AUTHOR = {Alperin, J. L.},
     TITLE = {Local {R}epresentation {T}heory},
    SERIES = {Cambridge Studies in Advanced Mathematics},
    VOLUME = {11},
      NOTE = {Modular {R}epresentations as an {I}ntroduction to the {L}ocal
              {R}epresentation {T}heory of {F}inite {G}roups},
 PUBLISHER = {Cambridge University Press, Cambridge},
      YEAR = {1986},
     PAGES = {x+178},
      ISBN = {0-521-30660-4},
   MRCLASS = {20-02 (20C20)},
MRREVIEWER = {Peter\ W.\ Donovan},
       DOI = {10.1017/CBO9780511623592},
       URL = {https://doi-org.remotexs.ntu.edu.sg/10.1017/CBO9780511623592},
}

@book {ben1,
    AUTHOR = {Benson, D. J.},
     TITLE = {Representations and {C}ohomology. {I}},
    SERIES = {Cambridge Studies in Advanced Mathematics},
    VOLUME = {30},
   EDITION = {Second},
      NOTE = {Basic representation theory of finite groups and associative
              algebras},
 PUBLISHER = {Cambridge University Press, Cambridge},
      YEAR = {1998},
     PAGES = {xii+246},
      ISBN = {0-521-63653-1},
   MRCLASS = {20-02 (20Cxx 20J06)},
}

@phdthesis {wang,
    author = {Wang, J.},
    title = {On the {R}ank {V}arieties and {J}ordan {T}ypes of a {C}lass of {S}imple {M}odules},
    school = {Nanyang Technological University, School of Physical and Mathematical Sciences, Singapore},
    year = {2024},
}

@article {limtan,
    AUTHOR = {K. J. Lim and K. M. Tan},
     TITLE = {Periodic {L}ie modules},
   JOURNAL = {J. Algebra},
  FJOURNAL = {Journal of Algebra},
    VOLUME = {445},
      YEAR = {2016},
     PAGES = {280--294},
      ISSN = {0021-8693,1090-266X},
   MRCLASS = {20C30},
MRREVIEWER = {Craig\ J.\ Dodge},
       DOI = {10.1016/j.jalgebra.2015.08.017},
       URL = {https://doi-org.remotexs.ntu.edu.sg/10.1016/j.jalgebra.2015.08.017},
}

@book {bensonpgroups,
	AUTHOR = {Benson, D. J.},
	TITLE = {Representations of {E}lementary {A}belian {$p$}-{G}roups and {V}ector
	{B}undles},
	SERIES = {Cambridge Tracts in Mathematics},
	VOLUME = {208},
	PUBLISHER = {Cambridge University Press, Cambridge},
	YEAR = {2017},
	PAGES = {xvii+328},
	ISBN = {978-1-107-17417-7},
	MRCLASS = {20C20 (14J60 14L30)},
	MRREVIEWER = {Alan\ Koch},
	DOI = {10.1017/9781316795699},
	URL = {https://doi-org.remotexs.ntu.edu.sg/10.1017/9781316795699},
}

@article {bensonsymonds,
	AUTHOR = {D. J. Benson and P. Symonds},
	TITLE = {The non-projective part of the tensor powers of a module},
	JOURNAL = {J. Lond. Math. Soc. (2)},
	FJOURNAL = {Journal of the London Mathematical Society. Second Series},
	VOLUME = {101},
	YEAR = {2020},
	NUMBER = {2},
	PAGES = {828--856},
	ISSN = {0024-6107,1469-7750},
	MRCLASS = {20C20 (46J05)},
	MRREVIEWER = {Christopher\ P.\ Bendel},
	DOI = {10.1112/jlms.12288},
	URL = {https://doi-org.remotexs.ntu.edu.sg/10.1112/jlms.12288},
}

@book {jk,
	AUTHOR = {G. D. James and A. Kerber},
	TITLE = {The {R}epresentation {T}heory of the {S}ymmetric {G}roup},
	SERIES = {Encyclopedia of Mathematics and its Applications},
	VOLUME = {16},
	NOTE = {With a foreword by P. M. Cohn,
	With an introduction by Gilbert de B. Robinson},
	PUBLISHER = {Addison-Wesley Publishing Co., Reading, MA},
	YEAR = {1981},
	PAGES = {xxviii+510},
	ISBN = {0-201-13515-9},
	MRCLASS = {20-02 (20C30)},
	MRREVIEWER = {A.\ O.\ Morris},
}

@book {james,
	AUTHOR = {G. D. James},
	TITLE = {The {R}epresentation {T}heory of the {S}ymmetric {G}roups},
	SERIES = {Lecture Notes in Mathematics},
	VOLUME = {682},
	PUBLISHER = {Springer, Berlin},
	YEAR = {1978},
	PAGES = {v+156},
	ISBN = {3-540-08948-9},
	MRCLASS = {20C30 (20-02)},
	MRREVIEWER = {Dragomir\ \v Z.\ Djokovi\'c},
}

@article {glow,
	AUTHOR = {Giannelli, E. and Lim, K. J. and O'Donovan, W. and
	Wildon, M.},
	TITLE = {On signed {Y}oung permutation modules and signed
	{$p$}-{K}ostka numbers},
	JOURNAL = {J. Group Theory},
	FJOURNAL = {Journal of Group Theory},
	VOLUME = {20},
	YEAR = {2017},
	NUMBER = {4},
	PAGES = {637--679},
	ISSN = {1433-5883,1435-4446},
	MRCLASS = {20C30 (20C20)},
	MRREVIEWER = {Nadia\ P.\ Mazza},
	DOI = {10.1515/jgth-2017-0007},
	URL = {https://doi-org.remotexs.ntu.edu.sg/10.1515/jgth-2017-0007},
}

@incollection {happel,
	AUTHOR = {Happel, D.},
	TITLE = {Hochschild cohomology of finite-dimensional algebras},
	BOOKTITLE = {S\'eminaire d'{A}lg\`ebre {P}aul {D}ubreil et {M}arie-{P}aul
	{M}alliavin, 39\`eme {A}nn\'ee ({P}aris, 1987/1988)},
	SERIES = {Lecture Notes in Math.},
	VOLUME = {1404},
	PAGES = {108--126},
	PUBLISHER = {Springer, Berlin},
	YEAR = {1989},
	ISBN = {3-540-51812-6},
	MRCLASS = {16E40 (16E10 16G20 16P10 16S80 16W50)},
	MRREVIEWER = {Claude\ Cibils},
	DOI = {10.1007/BFb0084073},
	URL = {https://doi-org.remotexs.ntu.edu.sg/10.1007/BFb0084073},
}
\end{document}